\DeclareMathOperator{\tb}{tb}
\DeclareMathOperator{\rot}{rot}
\DeclareMathOperator{\lk}{lk}
\DeclareMathOperator{\de}{d}
\DeclareMathOperator{\e}{e}
\DeclareMathOperator{\cs}{cs}
\newcommand{\R}{\mathbb{R}}
\newcommand{\Z}{\mathbb{Z}}
\newcommand{\Q}{\mathbb{Q}}
\newcommand{\RP}{\mathbb{RP}^3}
\newcommand{\xist}{\xi_{\mathrm{st}}}
\newcommand{{\def\svgwidth{1,6ex}\,\,
\begingroup%
  \makeatletter%
  \providecommand\color[2][]{%
    \errmessage{(Inkscape) Color is used for the text in Inkscape, but the package 'color.sty' is not loaded}%
    \renewcommand\color[2][]{}%
  }%
  \providecommand\transparent[1]{%
    \errmessage{(Inkscape) Transparency is used (non-zero) for the text in Inkscape, but the package 'transparent.sty' is not loaded}%
    \renewcommand\transparent[1]{}%
  }%
  \ifx\svgwidth\undefined%
    \setlength{\unitlength}{49.54732089bp}%
    \ifx\svgscale\undefined%
      \relax%
    \else%
      \setlength{\unitlength}{\unitlength * \real{\svgscale}}%
    \fi%
  \else%
    \setlength{\unitlength}{\svgwidth}%
  \fi%
  \global\let\svgwidth\undefined%
  \global\let\svgscale\undefined%
  \makeatother%
  \begin{picture}(1,1.00059637)%
    \put(0,0){\includegraphics[width=\unitlength,page=1]{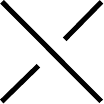}}%
  \end{picture}%
\endgroup%
\,\,}}{{\def\svgwidth{1,6ex}\,\,
\begingroup%
  \makeatletter%
  \providecommand\color[2][]{%
    \errmessage{(Inkscape) Color is used for the text in Inkscape, but the package 'color.sty' is not loaded}%
    \renewcommand\color[2][]{}%
  }%
  \providecommand\transparent[1]{%
    \errmessage{(Inkscape) Transparency is used (non-zero) for the text in Inkscape, but the package 'transparent.sty' is not loaded}%
    \renewcommand\transparent[1]{}%
  }%
  \ifx\svgwidth\undefined%
    \setlength{\unitlength}{49.54732089bp}%
    \ifx\svgscale\undefined%
      \relax%
    \else%
      \setlength{\unitlength}{\unitlength * \real{\svgscale}}%
    \fi%
  \else%
    \setlength{\unitlength}{\svgwidth}%
  \fi%
  \global\let\svgwidth\undefined%
  \global\let\svgscale\undefined%
  \makeatother%
  \begin{picture}(1,1.00059637)%
    \put(0,0){\includegraphics[width=\unitlength,page=1]{PushOff.pdf}}%
  \end{picture}%
\endgroup%
\,\,}} 
\declaretheoremstyle[notefont=\bfseries,notebraces={}{},%
    headpunct={},postheadspace=1em]{mystyle}
\declaretheorem[style=mystyle,numbered=no,name=Theorem]{thm-hand}
\newtheoremstyle{theorem}{}{}{\itshape}{}{\bfseries}{}{ }{} 
\newtheoremstyle{definition}{}{}{}{}{\bfseries}{}{ }{} 
\theoremstyle{theorem}
\newtheorem{Theorem}{Theorem}[section]
\newtheorem{theorem}[Theorem]{Theorem}
\newtheorem{lem}[Theorem]{Lemma}
\newtheorem{cor}[Theorem]{Corollary}
\newtheorem{question}[Theorem]{Question}
\theoremstyle{definition}
\newtheorem{remark}[Theorem]{Remark}
\definecolor{amaranth}{rgb}{0.9, 0.17, 0.31} 
\definecolor{carrotorange}{rgb}{0.93, 0.57, 0.13} 
\definecolor{citrine}{rgb}{0.89, 0.82, 0.04} 
\definecolor{dartmouthgreen}{rgb}{0.05, 0.5, 0.06} 
\definecolor{ballblue}{rgb}{0.13, 0.67, 0.8} 
\definecolor{ceruleanblue}{rgb}{0.16, 0.32, 0.75} 
\definecolor{amethyst}{rgb}{0.6, 0.4, 0.8} 
\definecolor{amber}{rgb}{1.0, 0.75, 0.0} 
\definecolor{burlywood}{rgb}{0.87, 0.72, 0.53} 
\numberwithin{equation}{section}
\begin{document}

\title[Contact surgery numbers of projective spaces]{Contact surgery numbers of projective spaces}

\author{Marc Kegel}
\address{Humboldt Universit\"at zu Berlin, Rudower Chaussee 25, 12489 Berlin, Germany}
\email{kegelmarc87@gmail.com}

\author{Monika Yadav}
\address{Statistics and Mathematics Unit, Indian Statistical Institute, Kolkata,
	India}
 \email{monika.yadav9413@gmail.com}


\date{\today} 

\begin{abstract}

\end{abstract}

\keywords{Contact surgery numbers, Legendrian knots, $\de_3$-invariant, $\Gamma$-invariant}

\begin{abstract}
We classify all contact projective spaces with contact surgery number one. In particular, this implies that there exist infinitely many non-isotopic contact structures on the real projective $3$-space which cannot be obtained by a single rational contact surgery from the standard tight contact $3$-sphere.

Large parts of our proofs deal with a detailed analysis of Gompf's $\Gamma$-invariant of tangential $2$-plane fields on $3$-manifolds. From our main result we also deduce that the $\Gamma$-invariant of a tangential $2$-plane field on the real projective $3$-space only depends on its $\de_3$-invariant.
\end{abstract}

\makeatletter
\@namedef{subjclassname@2020}{%
  \textup{2020} Mathematics Subject Classification}
\makeatother

\subjclass[2020]{53D35; 53D10, 57K10, 57R65, 57K33} 

\maketitle


\section{Introduction}

A central result in $3$-dimensional contact topology, due to Ding and Geiges, states that any connected, oriented, closed contact $3$-manifold with a co-orientable, positive contact structure can be obtained by contact surgery on a Legendrian link in the standard tight contact $3$-sphere $(\mathbb S^3,\xist)$~\cite{Ding_Geiges_Surgery}. Moreover, all contact surgery coefficients can be assumed to be $\pm1$. This leads to a natural complexity measure for a given contact $3$-manifold, the minimal number of components required for the surgery link.
The \textit{contact surgery number} $\cs(M, \xi)$ of a contact $3$-manifold $(M,\xi)$ is defined as the minimal number of components of a Legendrian link $L$ in $(\mathbb{S}^3, \xist)$ needed to obtain $(M,\xi)$ via rational contact surgery along $L$ (with nonzero contact surgery coefficients). Variants of this notion include $\cs_\Z(M, \xi)$, $\cs_{1/\Z}(M, \xi)$, and $\cs_{\pm1}(M, \xi)$, where the surgery coefficients are required to be integers, reciprocal integers, or $\pm1$, respectively.

The study of contact surgery numbers was initiated in~\cite{EKS_contact_surgery_numbers,CK_contact_surgery_numbers}, where explicit calculations were performed for simple manifolds, and general upper bounds were established. Notably, it was shown that the contact surgery number of a contact manifold $(M,\xi)$ is at most three more than the topological surgery number of the underlying smooth manifold $M$. 

In this paper, we extend the study of contact surgery numbers by classifying all contact structures on the real projective $3$-space $\RP$ with contact surgery number one. To state the result we recall the classification of contact structures on $\RP$, for further details we refer to Section~\ref{sec:invariants} and~\ref{sec:computing}. By~\cite{Etnyre_lens,Honda_lens,Giroux_lens} there exists a unique tight contact structure $\xist$ on $\RP$, that is obtained as the quotient of $(\mathbb{S}^3,\xist)$. The overtwisted contact structures are determined by the underlying tangential $2$-plane fields~\cite{Eliashberg_OT}, which are completely classified by the two \textit{homotopical invariants}, Gompf's $\Gamma$-invariant and the $\de_3$-invariant~\cite{Gompf_Stein}.\footnote{Note, that in this article we use a normalization of the $\de_3$-invariant that differs from the one used for example in~\cite{Gompf_Stein}. We refer to the paragraph on our conventions at the end of this introduction for more details on our normalization.} Roughly speaking the $\de_3$-invariant $\de_3(\xi)$ is a rational number that determines a tangential $2$-plane field $\xi$ on the $3$-cells, while the $\Gamma$-invariant $\Gamma(\xi,\mathfrak{s})\in H_1(\RP)\cong\Z_2$ depends also on the choice of a spin structure $\mathfrak{s}$ and determines $\xi$ on the $2$-skeleton. 
To effectively compare the $\Gamma$ invariants, we consider the \textit{standard} surgery diagram of $\RP$ consisting of a single unknot with topological surgery coefficient $-2$. In that surgery diagram, the empty link is a characteristic sublink defining a spin structure $\mathfrak{s}_0$. Then we define $\Gamma(\xi)\in\Z_2$ to be $\Gamma(\xi,\mathfrak{s}_0)\in H_1(\RP)\cong\Z_2$. 

\begin{theorem}\label{thm:main}\hfill
\begin{enumerate}
    \item Any contact structure on $\RP$ has $\cs_{\pm1}\leq3$.
    \item The tight contact structure $\xist$ on $\RP$ has $\cs_{\pm1}=\cs_{1/\Z}=\cs_{\Z}=\cs=1$.
    \item An overtwisted contact structure on $\RP$ has $\cs_{\pm1}=1$ if and only if it has $\cs_{1/\Z}=1$ if and only if its pair $(\Gamma,\de_3)$ of $\Gamma$- and $\de_3$-invariants is equal to $(0,1+\frac{1}{4})$ or $(1,\frac{3}{4})$.
    \item An overtwisted contact structure on $\RP$ has $\cs_{\Z}=1$ if and only if its pair $(\Gamma,\de_3)$ of $\Gamma$- and $\de_3$-invariants is $\left(0, 1+\frac{1}{4}\right)$ or $\left(1,\frac{3}{4}\right)$ or there exists an integer $m\leq-1$ such that it is of the form
    \begin{align*}
        &\left(0,-2m^2-4m-1+\frac{1}{4}\right),\, \left(0,2m^2-2m+\frac{1}{4}\right),\\
        &\left(1,-2m^2-6m-4+\frac{3}{4}\right),\,\textrm{ or }
         \left(1,2m^2-1+\frac{3}{4}\right).
    \end{align*}
    \item An overtwisted contact structure on $\RP$ has $\cs=1$ if and only if its pair $(\Gamma,\de_3)$ of $\Gamma$- and $\de_3$-invariants takes one of the values given in Cases $(2)$--$(11)$ of Table~\ref{tab:tab2}.
\end{enumerate}
\end{theorem}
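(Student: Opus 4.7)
I would first establish parts (1) and (2) by an explicit construction. The Legendrian unknot $U_0\subset(\mathbb{S}^3,\xist)$ with $\tb(U_0)=-1$ and $\rot(U_0)=0$ has contact $(-1)$-surgery equal to $(\RP,\xist)$, proving $\cs(\RP,\xist)=\cs_{\pm 1}(\RP,\xist)=1$. For part (1), any overtwisted structure on $\RP$ can be obtained by attaching at most two further (stabilised) Legendrian meridians to $U_0$ and performing $(\pm1)$-contact surgeries; their combined effect on $(\Gamma,\de_3)$ is enough to hit every isotopy class via the classification of contact structures on $\RP$ recalled in the introduction.

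For parts (3)--(5) the key topological input is that every knot $K\subset\mathbb{S}^3$ admitting a rational Dehn surgery to $\RP$ is unknotted: for integral slopes this follows from the Ozsv\'ath--Szab\'o Heegaard Floer genus bound (leaving only genus $\le 1$ candidates, each ruled out by direct computation), and for non-integral slopes it is a consequence of the Culler--Gordon--Luecke--Shalen cyclic surgery theorem together with Moser's arithmetic for torus knot surgeries. Hence every one-component contact surgery on $(\mathbb{S}^3,\xist)$ producing a contact structure on $\RP$ is contact $r$-surgery on a Legendrian unknot, and the topological framing constraint forces $r = -2/q - \tb$ for some odd integer $q\neq 0$. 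The Legendrian unknots are parametrised by $(\tb,\rot)=(-n,\rho)$ with $n\ge 1$ and $|\rho|\le n-1$ of the correct parity, which reduces parts (3)--(5) to a bounded case analysis.

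I would then compute $(\Gamma,\de_3)$ for each triple $(n,\rho,r)$ admissible for the variant in question. The $\de_3$-invariant comes from the standard handle formula on the contact surgery cobordism; for $\Gamma$ I would convert the contact surgery diagram into a topological Kirby diagram, apply handle slides and inverse slam-dunks to reach the standard $(-2)$-framed unknot presentation of $\RP$, transport the spin structure $\mathfrak{s}_0$ along the way, and finally apply the $\Gamma$-formula developed in the paper's analysis of Gompf's invariant. Restricting to $r=\pm 1$ forces $\tb\in\{-1,-3\}$ and yields exactly the two overtwisted values in (3); the same two values also exhaust the $\cs_{1/\Z}=1$ list once the computation is in hand, proving $\cs_{\pm1}=1\iff\cs_{1/\Z}=1$. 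Restricting to $r\in\Z\setminus\{0\}$ (equivalently $q=\pm 1$) produces the four infinite families in (4), and allowing arbitrary nonzero rational $r$ (equivalently arbitrary odd $q$) produces the cases tabulated in Table~\ref{tab:tab2} for (5). The tight case of (2) is the sub-case $(n,\rho,r)=(1,0,-1)$, and the converse directions of (3)--(5) fall out of the same computation, since each listed $(\Gamma,\de_3)$-value is realised by the explicit surgery used to compute it.

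The main obstacle will be the $\Gamma$-computation rather than the $\de_3$-computation. The $\de_3$-piece is a routine application of the handle formula, but $\Gamma$ depends sensitively on the spin structure $\mathfrak{s}_0$, whose image must be tracked carefully through the Kirby moves that convert a contact surgery diagram on a (stabilised) Legendrian unknot into the reference $(-2)$-framed unknot of $\RP$. Once this bookkeeping is in place, the remainder is a systematic case analysis indexed by $n$, $\rho$, and the choice of odd $q$ together with the sign vector of the rational surgery decomposition.
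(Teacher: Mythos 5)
Your strategy for parts (2)--(5) is essentially the paper's: reduce to Legendrian unknots, enumerate the coefficients $\frac{2}{2n+1}-t$, and compute $(\Gamma,\de_3)$ via the $\de_3$-surgery formula and by transporting $\mathfrak{s}_0$ through Kirby moves to the standard $(-2)$-framed unknot. The one genuinely different ingredient is your justification of the topological input: the paper quotes the KMOS theorem that every slope of the unknot is characterizing, while you propose the cyclic surgery theorem plus Moser's classification for non-integral slopes and the L-space genus bound for the slopes $\pm2$. That route does work, but as stated it is incomplete: the genus bound only restricts L-space knots, so you also need that L-space knots are fibered and that the trefoil is the only genus-one candidate, which Moser then excludes; make this explicit.

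There are, however, two genuine gaps. First, in (3)--(5) you never decide which of the enumerated one-component surgeries give the tight structure rather than an overtwisted one, and this is not a formality: the diagrams of Case (1) of Table~\ref{tab:tab1} and a boundary subfamily of Case (5) all have $(\Gamma,\de_3)=\left(0,\frac{1}{4}\right)$, the same invariants as $\xist$. The paper needs Wand's theorem (or fillability) for Case (0), a contact handle-slide/cancellation argument for Case (1), the criterion that contact $r_c$-surgery on a Legendrian unknot with $0<r_c<-t$ is overtwisted, and a lantern destabilization for the $m=-1$ subcase of Case (5) (Lemma~\ref{lem:tight}). Without this you cannot exclude that the \emph{overtwisted} structure homotopic to $\xist$ has $\cs=1$, so the ``only if'' lists in (3)--(5) are unjustified; and your remark that ``each listed value is realised by the explicit surgery used to compute it'' silently uses uniqueness of the tight structure to know the realizing surgery is overtwisted, which is only automatic for pairs different from $\left(0,\frac{1}{4}\right)$. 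Second, your proof of (1) asserts that attaching at most two stabilized Legendrian meridians to $U_0$ realizes every isotopy class; that assertion \emph{is} the content of the bound and is unproved (one must check both that the underlying manifold remains $\RP$ and that all pairs $(\Gamma,\de_3)$ are hit). The paper instead deduces (1) from (3): the two overtwisted structures with $\cs_{\pm1}=1$ realize both values of $\Gamma$, and taking split unions with the at most two-component $(\pm1)$-diagrams of the overtwisted structures on $\mathbb{S}^3$ from~\cite{EKS_contact_surgery_numbers} realizes all $\de_3$-values via connected sum, giving $\cs_{\pm1}\leq3$; you should either adopt this argument or supply the computation your meridian claim requires.
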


Here we state some of the corollaries from our main theorem. First, we observe that there exist infinitely many overtwisted contact structures on $\RP$ that cannot be obtained by a single rational contact Dehn surgery along a single Legendrian knot in $(\mathbb{S}^3,\xist)$.

\begin{cor}\label{cor:cs2}
For both values of $\Gamma\in\Z_2$, there exist infinitely many non-contacto\-mor\-phic, overtwisted contact structures on $\RP$ with $\Gamma$-invariant $\Gamma$ that have $\cs=2$. 
\end{cor}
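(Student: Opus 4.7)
The plan is to combine parts~(1) and (5) of Theorem~\ref{thm:main} with a direct connected-sum construction producing $2$-component contact surgery diagrams.

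First, overtwisted contact structures on $\RP$ are classified by pairs $(\Gamma, \de_3) \in \Z_2 \times \Q$ (with $\de_3$ lying in a fixed coset depending on $\Gamma$), so for each $\Gamma$ there are infinitely many non-contactomorphic overtwisted structures with $\Gamma$-invariant $\Gamma$. Part~(1) of Theorem~\ref{thm:main} bounds $\cs \leq 3$ for each such structure, while part~(5) shows that only finitely many pairs $(\Gamma, \de_3)$ realise $\cs = 1$. It therefore suffices to exhibit, for each $\Gamma \in \Z_2$, an infinite family of contact structures on $\RP$ with $\Gamma$-invariant $\Gamma$ that can be obtained by contact surgery on a $2$-component Legendrian link in $(\mathbb{S}^3, \xist)$.

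To produce such a family, I would first choose, for each $\Gamma \in \Z_2$, a contact structure $\xi_\Gamma$ on $\RP$ with $\Gamma$-invariant $\Gamma$ and $\cs(\RP, \xi_\Gamma) = 1$; such $\xi_\Gamma$ is provided by parts~(2) and (3) of Theorem~\ref{thm:main}, which give explicit $\cs = 1$ representatives for both values of $\Gamma$. Fix a Legendrian knot $L_\Gamma \subset (\mathbb{S}^3, \xist)$ realising $\xi_\Gamma$ by a single rational contact surgery. In a Darboux ball disjoint from $L_\Gamma$ I would add, for each integer $n \neq 0$, a Legendrian unknot $U_n$ whose rational contact surgery coefficient is chosen so that the underlying topological surgery framing equals $1/n$. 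Since $1/n$-surgery on an unknot in $\mathbb{S}^3$ recovers $\mathbb{S}^3$, the resulting $2$-component contact surgery produces the contact connected sum $(\RP, \xi_\Gamma) \# (\mathbb{S}^3, \zeta_n)$, where $\zeta_n$ is an overtwisted contact structure on $\mathbb{S}^3$ depending on $n$. By additivity of the $\de_3$-invariant under contact connected sum, the $\de_3$-value of the resulting structure on $\RP$ equals $\de_3(\xi_\Gamma) + \de_3(\zeta_n) + c$ for a universal constant $c$, and as $n$ varies the rationals $\de_3(\zeta_n)$ take infinitely many distinct values (the standard effect of iterated Lutz twists on $\de_3$). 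Since $\mathbb{S}^3$ is a homology sphere, the $\Gamma$-invariant of the connected sum equals that of $\xi_\Gamma$, so the whole family has $\Gamma$-invariant $\Gamma$.

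This constructs, for each $\Gamma \in \Z_2$, infinitely many non-contactomorphic overtwisted contact structures on $\RP$ with $\Gamma$-invariant $\Gamma$ and $\cs \leq 2$. By part~(5) of Theorem~\ref{thm:main}, only finitely many of these can have $\cs = 1$, so infinitely many must satisfy $\cs = 2$, as required. The main obstacle in implementing this outline rigorously is a careful bookkeeping of the $\Gamma$- and $\de_3$-invariants under the contact connected sum and under contact rational surgery on a Legendrian unknot; both are standard but depend on the (non-standard) normalisation conventions of this paper, and one must also verify that the required $1/n$-framings are realisable as rational contact surgery coefficients on Legendrian unknots for all integers $n \neq 0$.
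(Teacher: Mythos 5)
There is a genuine gap at the final step. Your argument hinges on the claim that part~(5) of Theorem~\ref{thm:main} shows ``only finitely many pairs $(\Gamma,\de_3)$ realise $\cs=1$,'' and that is false: Table~\ref{tab:tab2} contains infinite families of values. Cases $(5)$, $(7)$, $(8)$, $(9)$, and $(11)$ are parametrized by integers $m\leq-1$ (and $n$), so there are infinitely many non-contactomorphic overtwisted contact structures on $\RP$ with $\cs=1$ for \emph{each} value of $\Gamma$. Consequently, from ``infinitely many structures with $\cs\leq2$'' you cannot conclude that infinitely many have $\cs=2$: your connected-sum family sweeps out essentially all $\de_3$-values in the relevant coset, and infinitely many of those values do lie in Table~\ref{tab:tab2}. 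The real content of the corollary is the lower bound, i.e.\ exhibiting infinitely many specific pairs $(\Gamma,\de_3)$ that do \emph{not} occur in Table~\ref{tab:tab2}. The paper does this by a parity-and-sign analysis: for $\Gamma=0$ every entry of Table~\ref{tab:tab2} has odd integer part of $\de_3$ except in Case $(5)$, where the estimate from the proof of Lemma~\ref{lem:tight} forces the integer part to be positive; hence all $\xi_{(0,d)}$ with $d$ even and negative have $\cs>1$, and similarly $\xi_{(1,d)}$ with $d$ odd and negative (using an analogous positivity estimate in the $\Gamma=1$ subcase of Case $(5)$).

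Your upper-bound construction (split union of a one-component diagram for $\xi_\Gamma$ with an unknot in a Darboux ball realizing an overtwisted $\mathbb{S}^3$ summand, plus additivity of $\de_3$ and invariance of $\Gamma$ under summing with a homology sphere) is sound in spirit and close to what the paper does: the paper takes the Case $(11)$ diagrams split-unioned with a contact $(+1)$-surgery on a $\tb=-2$ Legendrian unknot, whose summand has $\de_3=1$. To repair your proof you must (i) replace the false finiteness claim by an explicit identification of infinitely many excluded pairs, as above, and (ii) check that your two-component family actually hits infinitely many of those excluded pairs (the paper verifies this for the families $\xi_{(0,-2m^2-4m)}$ and $\xi_{(1,-2m^2-6m-3)}$, $m\leq-3$). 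Without step (i) the argument does not establish $\cs=2$ for even a single structure.
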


We also observe that there exist certain contact structures with unique single-component contact surgery diagrams.

\begin{cor}\label{cor:unique_diag1}
    If $K$ is a Legendrian knot in $(\mathbb{S}^3,\xist)$ such that contact $(-1)$- or contact $(+1)$-surgery along $K$ yields a contact structure $\xi$ on $\RP$, then we are exactly in one of the following three cases.
     \begin{itemize}
        \item $K$ is isotopic to the unique Legendrian realization of the unknot with $\tb=-1$ and $\rot=0$, the contact surgery coefficient is $-1$, and $\xi$ is contactomorphic to the standard tight contact structure $\xist$. 
        \item $K$ is isotopic to the unique Legendrian realization of the unknot with $\tb=-3$ and $\rot=0$, the contact surgery coefficient is $+1$, and $\xi$ is the overtwisted contact structure with $\Gamma(\xi)=0$ and $\de_3(\xi)=1+\frac{1}{4}$. 
        \item $K$ is isotopic to the unique Legendrian realization of the unknot with $\tb=-3$ and $|\rot|=2$, the contact surgery coefficient is $+1$, and $\xi$ is the overtwisted contact structure with $\Gamma(\xi)=1$ and $\de_3(\xi)=\frac{3}{4}$. 
    \end{itemize}
\end{cor}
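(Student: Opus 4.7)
The plan is to first use the smooth topology of surgeries yielding $\RP$ to pin down the possible Legendrian knots, and then to match each candidate against the list provided by Theorem~\ref{thm:main}(3). Contact $(\pm 1)$-surgery along $K$ corresponds topologically to Dehn surgery with coefficient $\tb(K)\pm 1$, so the underlying smooth knot must admit an integer Dehn surgery yielding $\RP=L(2,1)$. By the lens space surgery result of Kronheimer--Mrowka--Ozsv\'ath--Szab\'o, the only such knot in $\mathbb{S}^3$ is the unknot, with smooth surgery coefficient $\pm 2$. Since any Legendrian representative of the unknot satisfies $\tb\leq -1$, only two cases are possible: $\tb(K)=-1$ with contact coefficient $-1$, or $\tb(K)=-3$ with contact coefficient $+1$.

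Next, I would invoke the Eliashberg--Fraser classification of Legendrian unknots to enumerate the candidates: the unique representative with $(\tb,\rot)=(-1,0)$, and the three representatives with $\tb=-3$ and $\rot\in\{-2,0,2\}$. The first candidate is a Legendrian surgery on a maximal-$\tb$ unknot, so the resulting contact structure is Stein fillable, hence tight; by the uniqueness of the tight contact structure on $\RP$~\cite{Etnyre_lens,Honda_lens,Giroux_lens} it must be $\xist$, which accounts for the first bullet of the corollary.

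For the three remaining candidates, contact $(+1)$-surgery yields an overtwisted contact structure, and Theorem~\ref{thm:main}(3) immediately confines $(\Gamma,\de_3)$ to $\{(0,1+\tfrac{1}{4}),(1,\tfrac{3}{4})\}$. It remains to identify which Legendrian unknot yields which of these, and here I would apply the explicit formulas for $\Gamma$ and $\de_3$ developed earlier in the paper: the parity of $\rot$ controls $\Gamma$ in this setting, so the $\rot=0$ knot yields $\Gamma=0$ while the $\rot=\pm 2$ knots both yield $\Gamma=1$, and the value of $\de_3$ then follows from the standard contact surgery formula. The two $\rot=\pm 2$ representatives are interchanged by the orientation-reversing symmetry of $\mathbb{S}^3$ and so produce contactomorphic structures, matching the final bullet.

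The main obstacle I anticipate is essentially bookkeeping: carefully reconciling the sign and normalization conventions for $\Gamma$ and $\de_3$ with the contact surgery formulas, given that Theorem~\ref{thm:main} is stated in the non-standard normalization flagged in the introduction. Once those conventions are fixed, the enumeration above is exhaustive and produces exactly the three cases stated.
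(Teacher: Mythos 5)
Your proposal is correct and follows essentially the same route as the paper: reduce to Legendrian unknots via the KMOS result (the paper's Lemma~\ref{lem:smooth_diagrams_RP3} and Lemma~\ref{lem:contact_diagrams_RP3}, giving Cases $(0)$ and $(10)$ of Table~\ref{tab:tab1}), identify contact $(-1)$-surgery on the $\tb=-1$ unknot as $\xist$ via fillability/tightness and uniqueness of the tight structure (Lemma~\ref{lem:tight}), and compute $(\Gamma,\de_3)$ for the three $\tb=-3$ unknots as in Table~\ref{tab:tab2}. Only minor repairs are needed in your wording: in this case $\Gamma\equiv\rot/2\pmod 2$, so $\Gamma$ is governed by $\rot\bmod 4$ rather than the parity of $\rot$ (which is always even here); the $\rot=\pm2$ representatives agree after reversing the orientation of the knot (contact surgery being independent of that orientation), not via an orientation-reversing symmetry of $\mathbb{S}^3$; and overtwistedness of the two $(+1)$-surgeries should be justified, e.g.\ because their computed invariants differ from $\bigl(0,\tfrac{1}{4}\bigr)$ of the unique tight structure, or by Theorem~3.1 of~\cite{CK_contact_surgery_numbers} as in Lemma~\ref{lem:tight}.
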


\begin{cor}\label{cor:unique_diag2}
    If $K$ is a Legendrian knot in $(\mathbb{S}^3,\xist)$ such that for some integer $k\in\Z-\{-1,0,1\}$ contact $(1/k)$-surgery along $K$ yields a contact structure $\xi$ on $\RP$, then $K$ is isotopic to the unique Legendrian realization of the unknot with $\tb=-1$ and $\rot=0$, the contact surgery coefficient is $1/3$, and $\xi$ is the overtwisted contact structure with $\Gamma(\xi)=1$ and $\de_3(\xi)=\frac{3}{4}$. 
\end{cor}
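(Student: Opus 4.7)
The plan is to combine classical restrictions on lens-space Dehn surgeries with the classification of Legendrian unknots, and then to identify the resulting contact structure by appealing to Theorem~\ref{thm:main}(3).

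First I would show that $K$ is topologically the unknot. The topological surgery coefficient of a contact $1/k$-surgery along $K$ is $\tb(K) + 1/k = (\tb(K)\,k + 1)/k$, whose denominator in lowest terms equals $|k| \geq 2$, so the slope is non-integral. By the Cyclic Surgery Theorem of Culler--Gordon--Luecke--Shalen, a non-integral Dehn surgery on a knot in $\mathbb{S}^3$ yielding a lens space forces $K$ to be a torus knot. Moser's classification of torus-knot surgeries then excludes every nontrivial torus knot $T(p_0, q_0)$ with $p_0, q_0 \geq 2$: a lens-space surgery at slope $m/n$ requires $|m - n\,p_0 q_0| = 1$, and an $\RP$ outcome forces $|m| = 2$, so $|n\,p_0 q_0| \leq 3$, contradicting $p_0 q_0 \geq 6$. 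Hence $K$ is the unknot.

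Next I would pin down the Legendrian data and the coefficient. For the unknot, $(\tb\,k + 1)/k$-surgery yields $L(\tb\,k + 1, k)$, which equals $\RP$ precisely when $|\tb\,k + 1| = 2$. Combined with $\tb(K) \leq -1$ (the Bennequin bound for the Legendrian unknot) and $|k| \geq 2$, the only integer solution is $\tb = -1$ and $k = 3$. Eliashberg and Fraser's classification of Legendrian unknots then identifies $K$ uniquely, and in particular forces $\rot(K) = 0$.

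Finally I would identify $\xi$. Corollary~\ref{cor:unique_diag1} shows that the tight $\xist$ on $\RP$ can only be obtained from a $\pm 1$-contact surgery, so $\xi$ must be overtwisted; by Theorem~\ref{thm:main}(3), its pair is either $(\Gamma,\de_3) = (0, 1 + \tfrac{1}{4})$ or $(1, \tfrac{3}{4})$. To decide, I would decompose the contact $1/3$-surgery as three successive contact $(+1)$-surgeries on Legendrian push-offs of $K$, producing a three-component surgery diagram with all rotation numbers zero and with off-diagonal linking numbers equal to $-1$, and then compute $\de_3$ (or equivalently $\Gamma$) directly using the formulas developed earlier in the paper. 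The main obstacle is this last invariant computation: although the two candidate pairs are distinguished by either invariant, pinning down $\Gamma$ requires carefully tracking a characteristic sublink through the three-component push-off diagram --- precisely the $\Gamma$-calculus that the paper develops --- and confirming that it represents the nonzero class in $H_1(\RP) \cong \Z_2$, yielding $\Gamma(\xi) = 1$ and $\de_3(\xi) = \tfrac{3}{4}$.
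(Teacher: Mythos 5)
Your proposal is correct in substance, but it reaches the key topological conclusion by a genuinely different route than the paper. The paper first proves (Lemma~\ref{lem:smooth_diagrams_RP3}) that \emph{any} rational surgery on a knot yielding $\RP$ must be a $\tfrac{2}{2n+1}$-surgery on the unknot, using the deep input from~\cite{KMOS} that every slope of the unknot is characterizing; Corollary~\ref{cor:unique_diag2} is then read off from Lemma~\ref{lem:contact_diagrams_RP3}, Table~\ref{tab:tab1} (Case $(2)$ is the only single-knot diagram with reciprocal-integer coefficient $\neq\pm1$), and Table~\ref{tab:tab2}. You instead exploit that for $|k|\geq 2$ the topological slope $\tb+1/k$ is automatically non-integral, so the cyclic surgery theorem of Culler--Gordon--Luecke--Shalen reduces you to torus knots, and Moser's classification excludes the nontrivial ones; this is a correct and more elementary argument, but note it only works for this corollary --- it cannot replace the KMOS input in the paper's Lemma~\ref{lem:smooth_diagrams_RP3}, which must also handle integral slopes (as needed for Corollary~\ref{cor:unique_diag1} and Theorem~\ref{thm:main}). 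Your arithmetic pinning down $(\tb,k)=(-1,3)$ and the appeal to Eliashberg--Fraser are fine, and your final step (decompose $U(1/3)$ into three contact $(+1)$-surgeries on push-offs and compute the invariants via Lemmas~\ref{lem:d3} and~\ref{lem:Gamma}) is exactly the paper's Case~$(2)$ computation, so what you flag as the ``main obstacle'' is routine and already carried out in the paper.

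Two small repairs: Corollary~\ref{cor:unique_diag1} does \emph{not} say that $\xist$ can only arise from contact $(\pm1)$-surgery --- it only classifies what $(\pm1)$-surgeries can produce --- so your justification of overtwistedness as stated is a mis-citation. Either invoke the criterion used in Lemma~\ref{lem:tight} (contact $r_c$-surgery on a Legendrian unknot is overtwisted when $0<r_c<-\tb$, and here $r_c=\tfrac13<1$, by~\cite{CK_contact_surgery_numbers}), or simply note that the computed $\de_3=\tfrac34$ differs from $\de_3(\xist)=\tfrac14$, so $\xi$ is not the unique tight structure. Moreover, once $\de_3=\tfrac34$ is computed (a one-line application of Lemma~\ref{lem:d3} with $Q=(-2)$, $r=0$, $n=3$), Theorem~\ref{thm:main}(3) or Corollary~\ref{cor:Gamma} already forces $\Gamma=1$, so the characteristic-sublink tracking you anticipate can be bypassed entirely.
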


From our main result and the values from Table~\ref{tab:tab2} we will also deduce the surprising result that on $\RP$ the $\Gamma$-invariant of a tangential $2$-plane field is in fact determined by its $\de_3$-invariant. More precisely, we will show the following.

\begin{cor}\label{cor:Gamma}
    Let $\xi$ be a tangential $2$-plane field on $\RP$. Then we have
    \begin{enumerate}
        \item $\de_3(\xi)\in\Z+\frac{1}{4}$ or $\de_3(\xi)\in\Z+\frac{3}{4}$.
        \item $\de_3(\xi)\in\Z+\frac{1}{4}$ if and only if $\Gamma(\xi)=0$. 
        \item $\de_3(\xi)\in\Z+\frac{3}{4}$ if and only if $\Gamma(\xi)=1$.
        \item Conversely, for every pair $(\Gamma,d)$ with either $\Gamma=0$ and $d\in\Z+\frac{1}{4}$ or $\Gamma=1$ and $d\in\Z+\frac{3}{4}$ there exists a tangential $2$-plane field $\xi$ on $\RP$, unique up to homotopy of tangential $2$-plane fields, such that $(\Gamma(\xi),\de_3(\xi))=(\Gamma,d)$. 
    \end{enumerate}
\end{cor}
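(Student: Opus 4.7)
\emph{Proof plan.} The strategy is to combine the two $(\Gamma,\de_3)$-pairs exhibited in Theorem~\ref{thm:main}(3) with the general fact that on any closed oriented $3$-manifold the reduction of $\de_3$ modulo $\Z$ depends only on the homotopy class of the $2$-plane field over the $2$-skeleton, and hence only on $\Gamma$.

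More precisely, Gompf's classification of homotopy classes of tangential $2$-plane fields shows that, once a reference spin structure is fixed, two plane fields with the same $\Gamma$-invariant differ (outside a point) by an element of $\pi_3(\mathbb S^2)\cong\Z$. In the normalisation used in this article each such ``Hopf twist'' shifts $\de_3$ by exactly~$1$. Consequently $\de_3\bmod\Z$ is a function of $\Gamma\in\Z_2$ alone, so for each value of $\Gamma$ the set of achievable $\de_3$-values is a single coset of $\Z$ in $\Q$. Theorem~\ref{thm:main}(3) exhibits overtwisted contact structures with $(\Gamma,\de_3)=(0,1+\tfrac14)$ and $(\Gamma,\de_3)=(1,\tfrac34)$, which pin these two cosets down as $\Z+\tfrac14$ and $\Z+\tfrac34$ respectively, proving parts (1)--(3).

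Part (4) follows from the same classification. Uniqueness up to homotopy of tangential $2$-plane fields is immediate from Gompf's theorem, which asserts that the pair $(\Gamma,\de_3)$ is a complete invariant. For existence, starting from one of the two reference plane fields above and applying the appropriate number of Hopf twists produces a plane field realising any prescribed admissible value of $\de_3$ in its $\Z$-coset, and Eliashberg's theorem shows that each such homotopy class is in fact represented by an overtwisted contact structure if one prefers to stay within the contact category.

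The main obstacle is the careful verification, in the paper's chosen normalisation, that a single Hopf twist shifts $\de_3$ by exactly~$1$ (and not, say, by $\pm 1$ depending on orientation conventions or by some other integer constant). This is a direct calculation from Gompf's surgery-theoretic formula for $\de_3$, but it needs to be reconciled with the normalisation conventions recalled in the introduction before the two reference values from Theorem~\ref{thm:main}(3) can be used to read off the cosets $\Z+\tfrac14$ and $\Z+\tfrac34$. Once that bookkeeping is in place, the corollary is a short formal consequence of Theorem~\ref{thm:main}.
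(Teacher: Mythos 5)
Your proposal is correct and is essentially the paper's own argument: exhibit the two reference pairs $(0,1+\tfrac14)$ and $(1,\tfrac34)$ (Theorem~\ref{thm:main}(3)/Table~\ref{tab:tab2}), observe that all plane fields with a fixed $\Gamma$ have $\de_3$ in a single coset of $\Z$ with every value of that coset realized, and invoke Eliashberg and Gompf for existence and uniqueness in part (4). The only difference is packaging: where you modify the plane field over a ball by $\pi_3(\mathbb{S}^2)$-twists, the paper takes connected sums with the overtwisted contact structures on $\mathbb{S}^3$ (whose $\de_3$-values are exactly the integers in this normalization, $\de_3$ being additive under connected sum, citing Ding--Geiges--Stipsicz), which also renders your flagged sign/normalization bookkeeping immaterial, since twists in both directions are available and one only needs the shifts to sweep out all of $\Z$.
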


We wonder the following question.

\begin{question}
    On which other rational homology $3$-spheres is the $\Gamma$-invariant (or Euler class) of a tangential $2$-plane field determined by its $\de_3$-invariant?\footnote{Add in proof: As part of forthcoming joint work with Isacco Nonino we will answer this question in detail. A similar approach was suggested to us by the referee.}
\end{question}

\subsection{Outline of the arguments}

To prove Theorem \ref{thm:main}, we first deduce from the main result of~\cite{KMOS} that if $K$ is a knot in $\mathbb S^3$ such that topological $r$-surgery on $K$ yields a manifold diffeomorphic to $\RP$, then $K$ is the unknot and $r=\frac{2}{2n+1}$ for some $n\in \mathbb{Z}$. This implies that a contact structure on $\RP$ has contact surgery number $1$ if and only if it admits a contact surgery diagram consisting of a single Legendrian unknot with an appropriate contact surgery coefficient. Thus, by varying $n$ over $\mathbb{Z}$ and considering all Legendrian unknots~\cite{Eliashberg_Fraser}, we generate a complete list of contact surgery diagrams along single Legendrian knots that yield contact structures on $\RP$. Once we obtain these contact surgery diagrams, we compute their homotopical invariants, which proves Theorem~\ref{thm:main}. By carefully comparing the values of these invariants, we derive the above corollaries.
   
\subsection*{Conventions}
Throughout this paper, all contact structures are assumed to be positive and coorientable. Legendrian knots in $(\mathbb S^3,\xist)$ are always presented in their front projection. We write $t$ and $r$ for the Thurston--Bennequin invariant and the rotation number of an oriented Legendrian knot in $(\mathbb S^3,\xist)$. Since a contact surgery diagram determines a contact manifold only up to contactomorphism, we consider contact manifolds up to contactomorphism rather than isotopy. We write $\cong$ to denote a contactomorphism between two contact manifolds. Moreover, the contactomorphism type of a contact surgery does not depend on the orientation of the Legendrian surgery link. Nevertheless, we primarily work with oriented Legendrian links in $(\mathbb{S}^3, \xist)$, since then certain invariants are easier to compute. We normalize the $\de_3$-invariant such that $\de_3(\mathbb{S}^3, \xist) = 0$, following conventions in~\cite{casals2021stein, EKS_contact_surgery_numbers, surgery_graph,CK_contact_surgery_numbers}. With this normalization, the $\de_3$-invariant is additive under connected sum and takes integer values on homology spheres. In our normalization, the $\de_3$-invariant is defined via
$$d_3(Y,\xi):= \frac{1}{4}\big(c_1(X,J)^2 -3\sigma(X)-2\chi(X)\big)+ \frac{1}{2},$$
where $(X, J )$ is a compact almost-complex $4$-manifold whose boundary is $Y$ and such that $\xi$ is induced by the almost complex structure $J$. This differs from the original sources, such as~\cite{Gompf_Stein,Ding_Geiges_Stipsicz}.

\subsection*{Acknowledgments} 

This work began during a visit of MY at the Ruhr-Uni\-ver\-si\-ty Bochum. We thank Kai Zehmisch for the financial support that made the stay possible. Large parts of this project were carried out when MY visited the Humboldt University Berlin via a \textit{WINS postdoctoral fellowship} and a \textit{Math+ Hanna Neumann Fellowship}. MK is funded by the DFG, German Research Foundation, (Project: 561898308).

\section{Contact Dehn surgery}

In this section, we provide background on contact Dehn surgery along Legendrian knots. For further details, we refer to~\cite{Geiges_book,Gompf_Stein,Ding_Geiges_Surgery,Ding_Geiges_Stipsicz,Ozbagci_Stipsicz_book,Durst_Kegel_rot_surgery,phdthesis,Legendrian_knot_complement,casals2021stein,EKS_contact_surgery_numbers}.

 Let $K$ be a Legendrian knot in $(\mathbb{S}^3, \xist)$. A \textit{contact Dehn surgery} along $K$ with \textit{contact surgery coefficient} ${p}/{q} \in \mathbb{Q} - \{0\}$ is performed by removing a tubular neighborhood of $K$ and attaching a solid torus $\mathbb{S}^1 \times \mathbb{D}^2$ via a diffeomorphism that maps $\{pt\} \times \partial \mathbb{D}^2$ to $p\mu + q\lambda_c$. Here, $\mu$ denotes the meridian of $K$, and the \textit{contact longitude} $\lambda_c$ is the Legendrian knot obtained by pushing $K$ in the Reeb direction. The resulting $3$-manifold carries natural contact structures that coincide with $\xist$ outside the removed neighborhood and are tight on the newly glued-in solid torus. It was shown in~\cite{Honda_lens,Giroux_lens} that such a contact structure always exists, and is unique if $p=\pm1$. For more general contact surgery coefficients, there are only finitely many such contact structures, with the exact number depending on the continued fraction expansion of $p/q$ (see Lemma~\ref{lem:Kirby} below). We denote by $K({p}/{q})$ the surgered manifold with one such contact structure.

 The \textit{Seifert longitude} $\lambda_s$, obtained by pushing $K$ into a Seifert surface, satisfies the relation $\lambda_c=\lambda_s+\tb(K)\mu$
where $\tb(K)$ is the Thurston–Bennequin invariant of $K$. This implies that the \textit{topological surgery coefficient} $r_t$, which is measured with respect to the Seifert longitude $\lambda_s$, and the contact surgery coefficient $r_c$ are related by $r_c=r_t-\tb(K)$. 

Next, we introduce some useful notation which will be used throughout this article. For $m,n\in \mathbb{N}_0$, let $K_n$ denote a Legendrian knot which is obtained by adding $n$ stabilizations to $K$ and further $K_{n,m}$ denotes a Legendrian knot which is obtained by adding $m$ extra stabilizations to $K_n$. Let $K{\def\svgwidth{1,6ex}\,\,\,\,} K$ denote the Legendrian link consisting of $K$ and a Legendrian knot obtained by pushing $K$ in the Reeb direction (i.e.\ the contact longitude). 

\begin{lem} [Ding--Geiges~\cite{Ding_Geiges_torus_bundles,Ding_Geiges_Surgery}]\label{lem:Kirby}
Let $K$ be a Legendrian knot in $(\mathbb{S}^3,\xi_{st})$. 
\begin{enumerate}
    \item \textbf{Cancellation lemma:} For all $n\in \mathbb{Z}-\{0\}$, we have
    $$K\left(\frac{1}{n}\right){\def\svgwidth{1,6ex}\,\,\,\,} K\left(-\frac{1}{n}\right)\cong (\mathbb{S}^3,\xi_{st}).$$
    \item \textbf{Replacement lemma:} For all $n\in \mathbb{Z}-\{0\}$, we have
    $$K\left(\pm\frac{1}{n}\right)\cong K(\pm 1){\def\svgwidth{1,6ex}\,\,\,\,}\cdots{\def\svgwidth{1,6ex}\,\,\,\,} K(\pm 1).$$
    \item \textbf{Translation lemma:} For $r\in \mathbb{Q}-\{0\}$ and $k \in \mathbb{Z}$, we have
    $$K(r)\cong K\left(\frac{1}{k}\right){\def\svgwidth{1,6ex}\,\,\,\,} K\left(\frac{1}{
\frac{1}{r}-k}\right).$$
In the case when $r<0$, $r$ can be uniquely written as 
\begin{align*}
    r=[r_1+1,\ldots,r_n]:=r_1+1-\frac{1}{r_2-\frac{1}{\cdots-\frac{1}{r_n}}}
\end{align*} with integers $r_1,\ldots,r_n\leq -2$ and then we have 
\begin{align*}
    K(r)\cong K_{|2+r_1|}(-1){\def\svgwidth{1,6ex}\,\,\,\,} K_{|2+r_1|,|2+r_2|}(-1){\def\svgwidth{1,6ex}\,\,\,\,}
\cdots {\def\svgwidth{1,6ex}\,\,\,\,} K_{|2+r_1|,|2+r_2|,\ldots, |2+r_n| }(-1).
\end{align*}
\end{enumerate}
In addition, all these results hold true in a tubular neighborhood of $K$. In particular, they can be applied to knots in larger contact surgery diagrams.\qed
\end{lem}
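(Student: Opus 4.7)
My plan is to prove each of the three parts by combining the classification of tight contact structures on solid tori (Honda, Giroux) with the topological Rolfsen/Kirby calculus. All three statements are local near $K$, so by the contact Weinstein neighborhood theorem it suffices to work in a standard tight neighborhood of $K$, where the surgered solid tori admit explicit decompositions via basic slices.

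For the cancellation lemma (1), I would work in a tubular neighborhood containing both $K$ and its Reeb pushoff. Topologically, the two fillings with coefficients $1/n$ and $-1/n$ correspond to Dehn twists along the boundary torus that cancel, so the underlying manifold is $\mathbb{S}^3$. Contact-theoretically, the uniqueness of the tight filling of each of the two solid tori (since both coefficients are of the form $1/k$) lets one identify the glued structure with $\xist$ on the original neighborhood, and the contactomorphism extends by the identity outside.

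For the translation lemma (3), the topological content of $K(r) \cong K(1/k) \Pushoff K(1/(1/r - k))$ is a Rolfsen twist applied to one of the components, which is standard. To upgrade the identity to a contactomorphism I would use that the unique tight gluing on the surgered torus factors through the two boundary-parallel basic slices associated to $1/k$ and $1/(1/r - k)$, so matching contact decompositions on both sides gives the desired contactomorphism. For the continued fraction version I would induct on the length $n$ of the expansion $[r_1+1,\ldots,r_n]$, applying the translation lemma with $k=-1$ at each step and identifying the newly introduced stabilizations with the basic-slice data of Honda's classification of tight structures on the corresponding lens-space-like filling.

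The replacement lemma (2) then follows by iterating part (3) with $k=\pm 1$, peeling off one $\pm 1$ surgery at a time. The main obstacle is the continued-fraction bookkeeping in part (3): one must verify that the precise stabilization pattern $K_{|2+r_1|,\ldots,|2+r_i|}$ produces the correct basic slice at each stage of the induction, and that matching the sign of each stabilization with the orientation of the corresponding edge in the Farey tessellation agrees with Honda's sign conventions. This is a technical but standard calculation in convex-surface theory, and would occupy the bulk of the careful writing.
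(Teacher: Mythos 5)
The paper does not prove this lemma at all: it is stated with a citation to Ding--Geiges and no argument, so there is no internal proof to compare against. What you propose is essentially a reconstruction of the original Ding--Geiges proofs, and your toolkit (the Honda--Giroux classification of tight contact structures on solid tori and $T^2\times[0,1]$, basic slices, Farey/continued-fraction bookkeeping) is indeed the correct one. As a proof sketch, however, it has genuine gaps at exactly the points where the original arguments do the real work.

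First, in the cancellation lemma the step ``uniqueness of the tight filling of each of the two solid tori lets one identify the glued structure with $\xist$'' does not follow: uniqueness of tight structures can only be invoked once you know the structure in question is tight, and the contact structure on the reglued tubular neighborhood of $K$ (the union of the two surgery solid tori together with the $T^2\times[0,1]$ region between them) is not a priori tight merely because each glued-in solid torus is tight. Showing that the composite of the $(1/n)$- and $(-1/n)$-surgeries reproduces the standard tight neighborhood -- for instance by an explicit model computation, which is how Ding--Geiges argue -- is the crux, and it is missing from your sketch. Second, your phrase ``the unique tight gluing on the surgered torus'' in the translation step is false for general $r$: when $r$ is not of the form $1/k$ there are several tight structures on the surgery solid torus (finitely many, counted by the continued fraction expansion), and the lemma is really a correspondence between these finitely many choices on the two sides; the argument must track this multiplicity rather than assume uniqueness, which is also why the paper's notation $K(r)$ refers to ``one such contact structure.'' Third, the heart of the negative continued-fraction statement is precisely the identification of the sign choices of the basic slices with the signs of the stabilizations of the pushoffs $K_{|2+r_1|,\ldots,|2+r_i|}$ (equivalently with their rotation numbers); you defer this as ``technical but standard,'' but without carrying it out the specific stabilization pattern in the statement is not justified. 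Your outline points at the correct literature-level proof, but these three items are the actual content and would need to be supplied.
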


\section{The homotopical invariants of a contact structure}\label{sec:invariants}

We will need to compute the algebraic invariants of the underlying tangential $2$-plane field of a contact structure. It is known that a tangential $2$-plane field $\xi$ on a rational homology sphere $M$ is (up to homotopy) completely determined by the $\de_3$-invariant and the Gompf's $\Gamma$-invariant~\cite{Gompf_Stein,Ding_Geiges_Stipsicz}. 
Roughly speaking, the $\Gamma$-invariant is a refinement of the Euler class and encodes $\xi$ on the $2$-skeleton of $M$, while the $\de_3$-invariant specifies $\xi$ on the $3$-cell. First, we recall the following lemma to compute the $\de_3$-invariant for $(\pm1/n)$-surgeries~\cite{Durst_Kegel_rot_surgery}.

 \begin{lem}\label{lem:d3}
     Let $L=L_1\cup\cdots\cup L_k$ be an oriented Legendrian link in $(\mathbb{S}^3,\xi_{st})$ and let $(M,\xi)$ be the contact manifold obtained by contact $\left(\pm {1}/{n_i}\right)$-surgeries along $L$, for $n_i>0$. Let $t_i$ and $r_i$ be the Thurston--Bennequin invariant and rotation number of $L_i $ for all $i=1,\ldots,k$. Let $l_{ij}$ be the linking number of $L_i$ with $L_j$ and let $\frac{p_i}{q_i}=t_i\pm \frac{1}{n_i}$ be the topological surgery coefficient of $L_i$. We define the generalized linking matrix $Q$ by
\begin{align*}
  Q=\begin{pmatrix}
     p_1&q_2l_{12}&\cdots& q_kl_{1k}\\
     q_1l_{12}&p_2&\cdots &q_kl_{2k}\\
     \vdots&&\ddots&\vdots\\
     q_1l_{1k}&&&p_k
  \end{pmatrix} . 
\end{align*}
\begin{enumerate}
    \item The first homology $H_1(M)$ is presented  by the abelian group $\langle\mu_i| Q\mu^T=0\rangle$, where $\mu=(\mu_1,\ldots,\mu_k)$ is the vector of meridians $\mu_i$ of $L_i$.
    \item If there exists a rational solution $b \in \Q^k$ of $Qb = r$, where $r=(r_1,\ldots,r_k)^T$, the $\de_3$-invariant is well defined and is computed as
    $$d_3=\frac{1}{4}\sum_{i=1}^k\big(n_ir_ib_i+(3-n_i)\operatorname{sign}_i\big)-\frac{3}{4}\sigma(Q),$$
    where $\operatorname{sign}_i$ denotes the sign of the contact surgery coefficient of $L_i$ and $\sigma(Q)$ is the signature of $Q$. Note that the eigenvalues of $Q$ are all real and thus the signature of $Q$ is well-defined $($see Theorem 5.1. in~\cite{Durst_Kegel_rot_surgery}$)$.\qed
\end{enumerate}   
 \end{lem}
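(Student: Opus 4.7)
The plan is to deduce both parts from standard Kirby/handlebody calculus applied to an almost-complex $4$-manifold bounding $(M,\xi)$, after first reducing rational contact surgeries to $\pm 1$-contact surgeries via the Replacement lemma (Lemma~2.1(2)).

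For part (1), I would build $M$ from $\mathbb{S}^3$ by attaching a $2$-handle for each component $L_i$ along the attaching curve $p_i\mu_i + q_i\lambda_{s,i}$, where $\lambda_{s,i}$ is the Seifert longitude of $L_i$ (this is the smooth framing description equivalent to the contact one, up to the shift $r_c=r_t-\tb$). A Mayer--Vietoris / handle-decomposition argument then presents $H_1(M)$ on the meridians $\mu_i$ modulo the relations read off from the attaching curves. Since $\lambda_{s,i}$ is homologous in the link exterior to $\sum_{j\ne i} l_{ij}\mu_j$, the relation becomes $p_i\mu_i+q_i\sum_{j\ne i}l_{ij}\mu_j=0$, which is exactly the $i$-th row of $Q\mu^T=0$.

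For part (2), I would replace each contact $(\pm 1/n_i)$-surgery along $L_i$ by $n_i$ contact $(\pm 1)$-surgeries along successive Reeb pushoffs $L_i^{(1)},\dots,L_i^{(n_i)}$ using Lemma~2.1(2). Each contact $(-1)$-surgery is realized by a Weinstein $2$-handle attached to the standard Stein ball, and each contact $(+1)$-surgery is handled by an ``anti-Weinstein'' handle whose contribution to $c_1$ and $\sigma$ carries the opposite sign; together they produce a compact almost-complex $4$-manifold $(X,J)$ with $\partial X=M$ inducing $\xi$. From the handle decomposition one computes $\chi(X)=1+\sum_i n_i$, while $\sigma(X)$ equals the signature of the expanded linking matrix $\widetilde Q$ corresponding to the $\sum n_i$ pushoff copies. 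The class $c_1(X,J)$ is determined by evaluating it on the cocores: each pushoff copy contributes the rotation number $r_i$ of $L_i$. Solving $\widetilde Q\tilde b=\tilde r$ for $\tilde b$ yields $c_1^2=\tilde r^T \tilde b$, and plugging into the normalized formula $d_3=\tfrac14(c_1^2-3\sigma-2\chi)+\tfrac12$ produces the sum over handles.

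The main obstacle is the bookkeeping that collapses the expanded data for the $\sum n_i$ pushoffs into the compact expression in terms of the original matrix $Q$, the original rotation vector $r$, and the factor $n_i$. Here one uses that the pushoffs $L_i^{(1)},\dots,L_i^{(n_i)}$ all have the same $\tb$ and $r$ as $L_i$ and pairwise linking numbers equal to $\tb(L_i)$; a block-matrix congruence then shows $\sigma(\widetilde Q)=\sigma(Q)+\sum_i (n_i-1)\operatorname{sign}_i$ and, after solving the block system, $c_1^2=\sum_i n_i r_i b_i$ with $b$ a solution of $Qb=r$. Combining these identities with $\chi(X)$ and carefully tracking the $(+1)$ versus $(-1)$ sign conventions yields the asserted formula
$$d_3=\frac14\sum_{i=1}^k\bigl(n_ir_ib_i+(3-n_i)\operatorname{sign}_i\bigr)-\frac34\sigma(Q),$$
where the term $(3-n_i)\operatorname{sign}_i$ records exactly the per-handle $-2\chi-3\sigma$ contribution after the reduction.
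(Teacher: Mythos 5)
First, note that the paper does not prove this lemma at all: it is quoted (with a \qed) from the cited reference of Durst--Kegel, and your outline is essentially the strategy of that source (expand the rational surgeries into $(\pm1)$-surgeries on contact pushoffs, pass to the $4$-dimensional handle picture, and collapse the expanded data back to $Q$, $r$, $n_i$ via a block computation). Your part (1) is fine, up to a transpose convention: the geometric relation you derive, $p_i\mu_i+q_i\sum_{j\neq i}l_{ij}\mu_j=0$, is the $i$-th entry of $Q^{T}\mu^{T}=0$ rather than of $Q\mu^{T}=0$ as literally written in the statement; this does not affect the isomorphism type (same Smith normal form), but you should flag the convention. Your identities $\sigma(\widetilde Q)=\sigma(Q)+\sum_i(n_i-1)\operatorname{sign}_i$ and $c_1^2=\sum_i n_ir_ib_i$ (with all pushoffs having the same $\tb$, $\rot$ and pairwise linking $\tb$) are correct and are exactly the reduction needed.

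The genuine gap is in your treatment of the contact $(+1)$-surgeries. There is no ``anti-Weinstein'' handle producing a compact almost-complex $(X,J)$ with $\partial X=M$ and $J$ inducing $\xi$; when the (expanded) diagram contains $(+1)$-framed components, the almost complex structure exists only away from one point per such handle (this is the content of the Ding--Geiges--Stipsicz computation, which uses the cancellation lemma to cap off), and the $\de_3$-formula acquires an additive correction equal to the number $q$ of $(+1)$-framed components: in the paper's normalization,
\begin{equation*}
\de_3=\frac{1}{4}\bigl(c_1^2-3\sigma(X)-2\chi(X)\bigr)+q+\frac{1}{2},\qquad q=\sum_{\operatorname{sign}_i=+1}n_i .
\end{equation*}
Your proposal plugs into the formula without the $+q$ term, and your closing sentence attributes $(3-n_i)\operatorname{sign}_i$ entirely to the ``per-handle $-2\chi-3\sigma$ contribution,'' which confirms the omission: for $\operatorname{sign}_i=+1$ the contributions $-\tfrac{3}{4}(n_i-1)-\tfrac{n_i}{2}$ alone give $\tfrac{1}{4}(3-5n_i)$, not $\tfrac{1}{4}(3-n_i)$; only after adding $n_i$ from the $q$-correction does one get $\tfrac{1}{4}(3-n_i)\operatorname{sign}_i$ (for $\operatorname{sign}_i=-1$ there is no correction and your accounting is right). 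So as written your computation would be off by $q$ in every case involving positive contact surgeries, which are the cases the paper mainly uses. With the corrected input formula, the rest of your bookkeeping goes through and reproduces the stated result.
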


Next, we discuss the $\Gamma$-invariant of a contact manifold $(M, \xi)$. Let $\mathfrak{s}$ be a spin structure on $M$, then Gompf~\cite{Gompf_Stein} defines an invariant $\Gamma(\xi,\mathfrak{s})\in H_1(M)$, that depends only on the tangential $2$-plane field $\xi$ and the spin structure $\mathfrak{s}$. Intuitively, $\Gamma$-invariant can be viewed as a \textit{half} Euler class of $\xi$ since $2\Gamma(\xi,\mathfrak{s})=\operatorname{PD}(\e(\xi))$, for all spin structures $\mathfrak{s}$. But if $M$ has $2$-torsion in its first homology (as for example $\RP$) then $\Gamma$-invariant contains more information than the Euler class. To explain how to compute $\Gamma$-invariant from a contact $(\pm1)$-surgery diagram, we first recall how spin structures are represented in surgery diagrams~\cite{GS99}.

Let $L = L_1 \cup \dots \cup L_k$ be an integer surgery diagram of a smooth $3$-manifold $M$ along an oriented link $L$, where the framings of $L_i$ are measured relative to the Seifert framing. We denote the framing of $L_i$ by $l_{ii}$. A sublink $(L_j)_{j \in J}$, for some subset $J \subset \{1, 2, \dots, k\}$, is called a \textit{characteristic sublink} if  
\[
l_{ii} \equiv \sum_{j \in J} l_{ij} \pmod{2}
\]  
for every component $L_i$ of $L$. The set of characteristic sublinks of $L$ is in natural bijection with the set of spin structures on $M$~\cite{GS99}. Thus, a spin structure on $M$ can be described via a characteristic sublink of $L$.
The following lemma from~\cite{EKS_contact_surgery_numbers} explains how to compute the $\Gamma$-invariant from a contact $(\pm1)$-surgery diagram. 

\begin{lem}  \label{lem:Gamma}
Let \( L = L_1 \cup \dots \cup L_k \) be a Legendrian link in \( (\mathbb{S}^3, \xi_{st}) \), and let \( (M, \xi) \) be the contact manifold obtained by performing contact $(\pm 1)$-surgery along \( L \). 
Let \( (L_j)_{j \in J} \) be a characteristic sublink describing a spin structure \( \mathfrak{s} \) on \( M \). Then the \( \Gamma \)-invariant satisfies  
\[
\pushQED{\qed} 
\Gamma(\xi, \mathfrak{s}) = \frac{1}{2} \left( \sum_{i=1}^k r_i \mu_i + \sum_{j \in J} (Q\mu)_j \right) \in H_1(M).\qedhere
\popQED
\]  
\end{lem}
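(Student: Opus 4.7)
The plan is to realize $(M,\xi)$ as the oriented boundary of an almost complex 4-manifold and invoke Gompf's boundary formula for $\Gamma$, following the strategy of~\cite{EKS_contact_surgery_numbers}. First, I would form the 4-manifold $X$ obtained from $B^4$ by attaching 2-handles along $L_1,\dots,L_k$ with topological framings $t_i\pm 1$, so that $\partial X=M$. Then $H_2(X;\mathbb{Z})$ is freely generated by classes $[F_i]$ represented by Seifert surfaces of $L_i$ capped off by the 2-handle cores, and the intersection form is the generalized linking matrix $Q$ of Lemma~\ref{lem:d3}; I denote by $\mu_i^*\in H^2(X;\mathbb{Z})$ the dual basis. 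By the Eliashberg--Gompf construction, contact $(\pm 1)$-surgery equips $X$ with an almost complex structure $J$ whose induced oriented 2-plane field on $\partial X$ is homotopic to $\xi$, and Gompf's rotation-number formula then yields $c_1(X,J)=\sum_{i=1}^{k}r_i\,\mu_i^*$.

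Next, the characteristic sublink $(L_j)_{j\in J}$ determines a spin structure $\mathfrak{s}'$ on $X$ restricting to $\mathfrak{s}$, with associated integral characteristic element $w=\sum_{j\in J}[F_j]\in H_2(X;\mathbb{Z})$. Under Poincar\'e--Lefschetz duality followed by the natural map $H^2(X,\partial X;\mathbb{Z})\to H^2(X;\mathbb{Z})$, this corresponds to the class $c_{\mathfrak{s}'}=\sum_i\bigl(\sum_{j\in J}Q_{ij}\bigr)\mu_i^*$. The boundary restriction $H^2(X;\mathbb{Z})\to H^2(\partial X;\mathbb{Z})\cong H_1(M;\mathbb{Z})$, which sends $\mu_i^*\mapsto\mu_i$, then identifies this class with $\sum_{j\in J}(Q\mu)_j\in H_1(M)$.

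Now Gompf's boundary formula gives
\[
\Gamma(\xi,\mathfrak{s})=\tfrac{1}{2}\bigl(c_1(X,J)-c_{\mathfrak{s}'}\bigr)\big|_{\partial X}\in H_1(M).
\]
Division by $2$ inside $H^2(X;\mathbb{Z})$ is legitimate: for every Legendrian knot one has $t_i+r_i\equiv 1\pmod{2}$, and combined with the characteristic-sublink condition $l_{ii}\equiv\sum_{j\in J}l_{ij}\pmod{2}$, this forces each coefficient $r_i-\sum_{j\in J}Q_{ij}$ to be even. Since the relations $(Q\mu)_j=0$ hold in $H_1(M)$, the sign of the second sum is immaterial after boundary restriction, and the stated formula with a plus sign is recovered.

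The main technical subtlety is that contact $(+1)$-surgery 2-handles do not carry a Stein structure, so Gompf's rotation-number formula, originally derived for Stein handle attachments, must be extended to the almost complex (but non-Stein) handle produced by contact $(+1)$-surgery. I would handle this either by adapting the rotation-number calculation directly to such handles, or by applying the cancellation lemma of Lemma~\ref{lem:Kirby} to replace each $(+1)$-handle by a pair of cancelling $(-1)$-handles, computing $\Gamma$ in the resulting Stein diagram, and verifying invariance of the formula under the cancellation.
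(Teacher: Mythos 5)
Your overall strategy is the one actually used for this statement: the paper itself does not prove the lemma but imports it from~\cite{EKS_contact_surgery_numbers}, and the proof there runs exactly along your main line --- take the trace $X$ of the integer surgery, identify $c_1(X,J)$ with $\sum_i r_i\mu_i^*$, convert the characteristic sublink into the characteristic element $W=\sum_{j\in J}[F_j]$ (equivalently the class $\sum_i(\sum_{j\in J}Q_{ij})\mu_i^*$), and apply Gompf's boundary formula for $\Gamma$ from~\cite{Gompf_Stein}, with the parity check $r_i\equiv\sum_{j\in J}Q_{ij}\pmod 2$ justifying the division by $2$ and the relations $(Q\mu)_j=0$ making the sign of the second sum irrelevant after restriction. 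All of that is correct.

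The gap is in your treatment of the contact $(+1)$-handles, which is the only non-routine point. Your fallback via the cancellation lemma cannot work: Lemma~\ref{lem:Kirby} cancels a contact $(+1)$-surgery against a contact $(-1)$-surgery on a push-off (removing both curves); it does not let you trade a $(+1)$-handle for a pair of $(-1)$-handles, and no such trade can exist in general, since an all-$(-1)$ (Stein) diagram would force the resulting contact manifold to be fillable, hence tight, whereas contact $(\pm1)$-surgery diagrams routinely produce overtwisted manifolds. So the second alternative you offer is not a repair, and the first alternative (``adapt the rotation-number calculation'') is precisely the content that still needs an argument. The standard resolution, as in~\cite{Ding_Geiges_Stipsicz}, is that $X$ carries an almost complex structure inducing $\xi$ only after deleting finitely many interior points (one for each $(+1)$-handle); this is why the $\de_3$-formula acquires its correction term counting $(+1)$-surgeries. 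For $\Gamma$ no correction appears, because $\Gamma$ is determined by the induced $\mathrm{spin}^c$/plane-field data over the $2$-skeleton of the boundary and the cohomological bookkeeping in $H^2(X;\Z)$ is unaffected by puncturing the interior, so $\langle c_1,[F_i]\rangle=r_i$ and Gompf's formula still apply; this is the step you should spell out (or cite) rather than appeal to cancellation.
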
  

To effectively use the above lemma for comparing $\Gamma$-invariants of contact structures described by contact surgery diagrams of the same underlying smooth $3$-manifold, we need to understand how the spin structures in these surgery diagrams are related. For that, we need to understand how a characteristic sublink changes under smooth Kirby moves. This is summarized in the following lemma which can be extracted from~\cite{GS99}.
 
\begin{lem}
    Let $L = L_1 \cup \dots \cup L_k$ be a smooth oriented integer surgery link with characteristic sublink $(L_j)_{j\in J}$ representing a spin structure $\mathfrak{s}$. Then the following modifications of the characteristic sublink under Kirby moves preserve the spin structure $\mathfrak{s}$.
    \begin{itemize}
      \item \textbf{Blow up/down:} Let $U$ be an unknot with surgery coefficient $\pm 1$ which is added to $L$ under a blow up move. Then $U$ gets added to the characteristic sublink if and only if $$\sum_{j\in J}\lk(U,L_j)=0\, \pmod{2}.$$ The other components of the characteristic sublink stay unchanged. Under the blow down move, we remove an unknot with $\pm1$ coefficients from $L$ and the other components of the characteristic sublink stay the same.
      \item \textbf{Handle slide:}
      If we slide the component $L_i$ over the component $L_k$ then $L_k$ changes its membership status in the characteristic sublink if and only if $L_i$ is in the characteristic sublink. All other components of the characteristic sublink stay the same.
      \item \textbf{Rolfsen twist:} If $L_k$ is a $0$-framed unknot and we perform an $n$-fold Rolfsen twist on $L_k$, then the resulting surgery diagram is again an integer surgery diagram. All components of the characteristic sublink remain unchanged, except for a possible change of the unknot $L_k$ which is being twisted. $L_k$ changes the membership status to the characteristic sublink if and only if 
      \[
      \pushQED{\qed} 
      n\left(1+\sum_{j\in J-\{k\}}\lk(L_k,L_j)\right)=1\, \pmod{2}.\qedhere
\popQED
\] 
  \end{itemize}
\end{lem}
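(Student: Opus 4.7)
The lemma concerns three verifications that a proposed update rule for the characteristic sublink is consistent with the defining congruence $l_{ii} \equiv \sum_{j\in J} l_{ij} \pmod 2$ across the three Kirby moves. I would prove each case by reducing to linear algebra over $\mathbb{F}_2$: treat the indicator vector $c_J \in \mathbb{F}_2^k$ of $J$ as the unknown in the matrix equation $\bar Q\, c_J \equiv \operatorname{diag}(\bar Q) \pmod 2$, where $\bar Q$ is the mod-$2$ reduction of the linking matrix, and track how this solution transforms under each move.

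For the handle slide $L_i \mapsto L_i + \epsilon L_k$, the transformation is a change of basis in $H_2(X_L)$, so the indicator vector transforms by the inverse-transpose of the corresponding elementary matrix. Expanding $\sum_{j \in J} L_j$ in the new basis $\{L_i' = L_i + \epsilon L_k,\; L_j' = L_j \text{ for } j \neq i\}$ shows that, modulo $2$, an extra copy of $L_k'$ appears precisely when $i \in J$, which gives the stated rule.

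For the blow-up of a $\pm 1$-framed unknot $U$ with linking numbers $\lambda_j = \lk(U, L_j)$, the extended linking matrix contributes one new equation attached to $U$ and perturbs each old equation by a term $\lambda_i c_U$. Under the ansatz $J' \cap \{1, \ldots, k\} = J$, the $U$-equation solves uniquely to $c_U \equiv 1 + \sum_{j\in J}\lambda_j \pmod 2$, matching the lemma. The perturbations are automatic in the basic case of a disjoint blow-up ($\lambda_j = 0$); the general case then follows by decomposing a linked blow-up into a disjoint blow-up plus handle slides, whose rule has just been established. Blow-down follows by reversing this argument.

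The Rolfsen twist is the most computational case. An $n$-fold twist of the spanning disk of the $0$-framed unknot $L_k$ alters $l_{ij}$ by $n\ell_i\ell_j$ and $l_{ii}$ by $n\ell_i^2$ for $i, j \neq k$, where $\ell_i := \lk(L_i, L_k)$, while $l_{kk}$ and $l_{ik}$ remain fixed at $0$ and $\ell_i$ respectively. The equation attached to $L_k$ reduces modulo $2$ to the original one and is automatic. Substituting into the equation for each $L_i$ with $i \neq k$, using $\ell_i^2 \equiv \ell_i \pmod 2$, and subtracting the original congruence, a common factor of $\ell_i$ appears multiplying $n(1 + \sum_{j\in J - \{k\}} \ell_j) + c_k' - c_k$, which must vanish modulo $2$; this yields the stated criterion. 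The main obstacle throughout is the bookkeeping in the blow-up case when $U$ is linked with existing components, which is why I would reduce it to the disjoint case plus handle slides rather than verifying the update rule directly on the linked matrix.
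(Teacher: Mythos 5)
You should first note that the paper does not prove this lemma at all: it is stated with a \qed and attributed to Gompf--Stipsicz, so your argument has to stand on its own. The parts of your proposal that genuinely work are the handle slide and the reduction of a linked blow-up to a split one: for the slide, the trace $4$-manifold $X_L$ and the characteristic class $c=\sum_{j\in J}[L_j]\in H_2(X_L)$ are unchanged, only the basis changes, and re-expanding $c$ flips the coefficient of $[L_k]$ exactly when $i\in J$; and chasing a split blow-up followed by slides of the $L_j$ over $U$ does reproduce the parity criterion $\sum_{j\in J}\lk(U,L_j)\equiv 0$.

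The gap is in what "preserve the spin structure" means. For the split blow-up and, much more seriously, for the Rolfsen twist, your argument consists of checking that the proposed updated sublink satisfies the defining congruence $l_{ii}\equiv\sum_{j\in J'}l_{ij}\pmod 2$ in the new diagram. That only shows it is \emph{a} characteristic sublink; it does not identify it as the one representing the \emph{same} spin structure $\mathfrak{s}$, and since the characteristic sublinks of any diagram biject with \emph{all} spin structures of $M$, the congruence alone cannot make this identification -- that identification is the entire content of the lemma. Concretely, in the Rolfsen twist case your "common factor of $\ell_i$" computation is empty whenever every $\ell_i=\lk(L_i,L_k)$ is even: then both choices of membership for $L_k$ give characteristic sublinks, yet the lemma asserts a definite answer (flip iff $n$ is odd), so the $\Z_2$-linear algebra cannot decide it; moreover the ansatz $J'\cap\{1,\dots,k\}=J$ assumes the shape of the conclusion. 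The Rolfsen twist also changes the trace $4$-manifold, so the "same characteristic class in the same $X$" argument that saves the handle-slide case is unavailable. To close the gap you need either (i) the invariant description of the correspondence (the spin structure of $M=\partial X_L$ determined by $J$ is the one extending over the complement of a surface representing $c$), and then an argument that $(X_L,c)\rightsquigarrow(X_L\#\pm\mathbb{CP}^2,\,c+e)$ induces the same boundary spin structure for the split blow-up; or (ii) for the Rolfsen twist, a decomposition of the $n$-fold twist on the $0$-framed unknot into a split blow-up, handle slides over $L_k$, and a blow-down, after which the criterion follows by chasing the two rules already established. As written, the proposal verifies consistency of the update rules but does not prove they track $\mathfrak{s}$.
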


\section{Smooth surgery diagrams of \texorpdfstring{$\RP$}{RP3}}

For the proof of Theorem~\ref{thm:main} we will first recall the classification of smooth surgery diagrams of $\RP$ along a single knot, which is a direct corollary of~\cite{KMOS}.

\begin{lem}\label{lem:smooth_diagrams_RP3}
    Let $K$ be a knot in $\mathbb{S}^3$ such that for some topological surgery coefficient $r\in\Q$ the $r$-surgery $K(r)$ along $K$ is diffeomorphic to $\RP$. Then $K$ is the unknot and $r=\frac{2}{2n+1}$ for some $n\in \mathbb{Z}$. Conversely, topological $(\frac{2}{2n+1})$-surgery on an unknot yields $\RP$. 
\end{lem}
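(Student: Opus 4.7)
The plan is to combine classical Dehn-surgery obstructions with the lens-space surgery restrictions of~\cite{KMOS}. The converse direction is immediate: topological $p/q$-surgery on the unknot in $\mathbb{S}^3$ produces the lens space $L(p,q)$, and since $L(2,2n+1)\cong L(2,1)=\RP$ for every $n\in\Z$ (because $2n+1$ is a unit modulo $2$), the surgery $(2/(2n+1))$ on the unknot yields $\RP$.

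For the main direction, suppose $K(r)\cong\RP$ with $r=p/q$ in lowest terms. From $|H_1(K(r))|=|p|$ and $|H_1(\RP)|=2$ we obtain $p=\pm 2$. If $r$ is not an integer (so $|q|\geq 2$), then the Cyclic Surgery Theorem of Culler--Gordon--Luecke--Shalen, combined with Moser's classification of lens-space surgeries on torus knots (which for the $(a,b)$-torus knot requires $|p-qab|=1$ with $ab\geq 6$, incompatible with $|p|=2$), forces $K$ to be the unknot.

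If instead $r=\pm 2$ is an integer, one invokes the main theorem of~\cite{KMOS}: if integer surgery on a knot $K\subset\mathbb{S}^3$ produces the lens space $L(p,q)$, then the knot Floer homology of $K$ is determined by $(p,q)$ and agrees with that of the corresponding L-space knot realizing the same surgery. For $L(2,\pm 1)$ this L-space knot is the unknot, so $\widehat{HFK}(K)$ has total rank one, and by the unknot detection theorem for Heegaard Floer homology this forces $K$ itself to be the unknot.

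In either case $K$ is the unknot, and then $K(r)\cong L(p,q)\cong\RP=L(2,1)$ holds precisely when $p=\pm 2$ and $q$ is odd, i.e.~$r=2/(2n+1)$ for some $n\in\Z$. The main obstacle is the integer case $r=\pm 2$, where purely topological tools do not suffice and one genuinely needs the Floer-theoretic input of~\cite{KMOS}; the rest of the argument is a clean consequence of CGLS, Moser, and the homology computation.
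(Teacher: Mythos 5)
Your proof is correct, but it is organized differently from the paper's. The paper gets the slope first from homology (coprimality already forces $r=\frac{2}{2n+1}$), observes that then $K(r)$ and $U(r)$ are diffeomorphic for the \emph{same} slope, and invokes the single KMOS statement that every rational slope of the unknot is characterizing; no case split is needed, and the converse is done by Rolfsen twists on the standard $(-2)$-diagram rather than by the lens-space classification $L(2,2n+1)\cong L(2,1)$ that you use (both are fine). Your route instead splits off the non-integer slopes and handles them with purely classical tools, the Cyclic Surgery Theorem (since $\pi_1(\RP)\cong\Z_2$ is cyclic, a non-integer slope forces $K$ to be a torus knot) plus Moser's condition $|qab-p|=1$, which is incompatible with $|p|=2$; this isolates the genuinely Floer-theoretic input to the single case $r=\pm2$, which is a nice clarification of where the hard theorem is really needed, at the cost of a longer argument. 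Two caveats: the statement you attribute to~\cite{KMOS} (knot Floer homology of a lens-space knot determined by $(p,q)$, plus unknot detection) is really closer to Ozsv\'ath--Szab\'o's lens-space surgery theorem, and your intermediate claim that ``the L-space knot realizing $L(2,\pm1)$ is the unknot, so $\widehat{HFK}(K)$ has rank one'' is essentially the conclusion you are trying to prove in that case; it is cleaner to quote the actual KMOS result (the characterizing-slope theorem, or its explicit corollary that $\RP$ is not surgery on a nontrivial knot), which is exactly what the paper does. One should also note that the characterizing-slope statement is about orientation-preserving diffeomorphisms; this is harmless here because $L(2,1)\cong L(2,-1)$, i.e.\ $\RP$ admits an orientation-reversing self-diffeomorphism, a point both you and the paper pass over silently.
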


\begin{proof}
We know that $\RP$ is diffeomorphic to $(-2)$-surgery on an unknot. Performing an $(n+1)$-fold Rolfsen twist along this unknot preserves the knot but changes the surgery coefficient to $$\frac{1}{n+1-\frac{1}{2}}=\frac{2}{2n+1}.$$ 

    Now let $K$ be a knot in $\mathbb{S}^3$ such that for some $r=p/q\in\Q$ the $r$-surgery $K(r)$ along $K$ is diffeomorphic to $\RP$. Since $H_1(K(r))$ is isomorphic to $\Z_p$ and the first homology of $\RP$ is isomorphic to $\Z_2$ it follows that $p=\pm2$. Since $p$ and $q$ are coprime it follows that $r$ is of the form $\frac{2}{2n+1}$. We conclude that $K$ and the unknot have orientation-preserving diffeomorphic $(\frac{2}{2n+1})$-surgeries. Since every slope of the unknot is characterizing~\cite{KMOS}, it follows that $K$ is isotopic to an unknot.
\end{proof}

\section{Contact surgery diagrams of contact structures on \texorpdfstring{$\RP$}{RP3}}

In this section, we will use Lemma~\ref{lem:smooth_diagrams_RP3} to describe contact $(\pm1/n)$-surgery diagrams of all contact structures on $\RP$ that have rational contact surgery number one.

\begin{lem}\label{lem:contact_diagrams_RP3}
    Let $K$ be a Legendrian knot in $(\mathbb{S}^3,\xist)$ such that some contact $r_c$-surgery on $K$ yields a contact structure on $\RP$. Then $K$ is isotopic to a Legendrian unknot $U$ with Thurston--Bennequin invariant $t\leq-1$ and there exists an integer $n\in\Z$ such that 
    $$r_c=\frac{2}{2n+1}-t.$$
    Moreover, depending on $n$ and $t$, the contact manifold $U(r_c)$ is contactomorphic to exactly one of the contact $(\pm1/k)$-surgery diagrams from the $12$ cases shown in Table~\ref{tab:tab1}. Conversely, all the contact surgery diagrams shown in Table~\ref{tab:tab1} present contact structures on $\RP$.

    In particular, it follows that the contact structures on $\RP$ with rational contact surgery number one are exactly the contact structures presented by the contact surgery diagrams in Table~\ref{tab:tab1}.
\end{lem}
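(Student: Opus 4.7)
My plan is to combine the smooth rigidity provided by Lemma~\ref{lem:smooth_diagrams_RP3} with the Eliashberg--Fraser classification of Legendrian unknots~\cite{Eliashberg_Fraser}, and then normalize every resulting contact surgery into a $(\pm 1/k)$-surgery diagram via the Ding--Geiges Kirby calculus of Lemma~\ref{lem:Kirby}. First, since the underlying smooth surgery on $K$ must yield $\RP$, Lemma~\ref{lem:smooth_diagrams_RP3} forces $K$ to be smoothly an unknot and the topological coefficient to be $r_t=\frac{2}{2n+1}$ for some $n\in\Z$. By the Eliashberg--Fraser classification, any Legendrian unknot $U$ is determined by its pair $(\tb,\rot)$ with $\tb=t\leq -1$, and the relation $r_c=r_t-\tb(K)=\frac{2}{2n+1}-t$ then yields the first two assertions.

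Next I would rewrite each $U(r_c)$ as a contact $(\pm 1/k)$-surgery diagram using the translation lemma, with the branching dictated by the sign of $r_c$. When $r_c>0$, Lemma~\ref{lem:Kirby}(3) with an appropriate integer $k$ splits $U(r_c)$ into contact $(+1/k)$- and $(+1/m)$-surgeries on a pair $U\Pushoff U$; when $r_c<0$, the negative continued fraction expansion $r_c=[r_1+1,\ldots,r_\ell]$ converts the surgery into a chain of contact $(-1)$-surgeries on iteratively stabilized push-offs of $U$. The rotation number of $U$, together with the freedom to stabilize each push-off to the left or to the right, accounts for the multiple contactomorphism classes produced by a single topological slope.

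The main organizational hurdle, and the step I expect to be the most demanding, is to run this case analysis exhaustively over all pairs $(t,n)$ with $t\leq -1$ and $n\in\Z$ (excluding the degenerate case $(t,n)=(-2,-1)$ that would give $r_c=0$), identify equivalent diagrams via the cancellation lemma and stabilization permutations, and verify that the resulting list consolidates to precisely the twelve entries of Table~\ref{tab:tab1}. A natural partition is by the signs of $2n+1$ and of $r_c$, together with a dichotomy between the short ``sporadic'' diagrams arising when $|2n+1|=1$ (which include the standard tight $\xist$ from contact $(-1)$-surgery on the $\tb=-1$ unknot and the two contact $(+1)$-surgeries on the $\tb=-3$ unknot) and the generic case $|2n+1|\geq 3$, where the length of the continued fraction grows with $|n|$ and $|t|$ and the first and last entries of the expansion are controlled by $t$ and $n$, yielding the parameterized families of the table.

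For the converse, I would read off each entry of Table~\ref{tab:tab1}, reconstruct the total topological surgery coefficient from the corresponding continued fraction (converting each contact $(\pm 1/k)$ back via $r_t=r_c+\tb$), and apply Lemma~\ref{lem:smooth_diagrams_RP3} to confirm that the underlying smooth manifold is $\RP$. This last verification is essentially mechanical once the diagrams are on the table.
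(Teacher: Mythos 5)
Your outline follows essentially the same route as the paper: smooth rigidity from Lemma~\ref{lem:smooth_diagrams_RP3} plus the Eliashberg--Fraser classification gives $K=U$ with $t\le-1$ and $r_c=\frac{2}{2n+1}-t$ (excluding $(t,n)=(-2,-1)$), and the translation lemma then converts each $U(r_c)$ into a $(\pm1)$- or $(\pm1/k)$-diagram via negative continued fraction expansions, exactly as in the paper. The caveat is that in this lemma the case analysis \emph{is} the proof: the paper's argument consists of splitting off $U(+1)$ to get $U(+1)\Pushoff U\bigl(-\frac{t(2n+1)-2}{t(2n+1)+2n-1}\bigr)$ and then establishing, by induction, the explicit expansions such as $P(t,n)=[-2,\dots,-2,-n-2,-2]$ for $n\ge0$ and the three other ranges of $(t,n)$, plus the separate treatment of $t=-1$, $n\le-1$ (where $U(1/3)$, $U(1/2)\Pushoff U_2(-1)$, etc.\ appear); your proposal names this step as ``the most demanding'' but does not carry it out, so the derivation of precisely the twelve entries of Table~\ref{tab:tab1} remains asserted rather than proven. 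One small imprecision in the positive-slope branch: after applying the translation lemma to $r_c>0$ the second coefficient is in general negative (e.g.\ $U(1/2)\Pushoff U(-3)$ for $t=-1$, $n=-3$), not a second positive reciprocal integer, so the split is into one $(+1/k)$-surgery and a further negative continued-fraction chain rather than a pair of $(+1/k)$- and $(+1/m)$-surgeries.
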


\begin{table}[htbp]
\caption{Surgery descriptions for contact structures on $\RP$ with contact surgery number $1$.}
\begin{tabular}{ c|l|c|c} 
Case &$U\left(\frac{2}{2n+1}-t\right)$&$t$ & $n$\\
\toprule
  (0)&$U(-1)$&-1&-1\\
    \midrule
  (1)&$U(+1){\def\svgwidth{1,6ex}\,\,\,\,} U_{n+1}(-\frac{1}{2})$&-1  &$\geq 0$ \\
  \midrule
 (2)& $U(\frac{1}{3})$&-1  & -2\\
 \midrule
 (3)&$ U(\frac{1}{2}){\def\svgwidth{1,6ex}\,\,\,\,} U_2(-1)$&-1  & -3\\
\midrule
(4)&$U(\frac{1}{2}){\def\svgwidth{1,6ex}\,\,\,\,} U_1(\frac{-1}{|n|-3}){\def\svgwidth{1,6ex}\,\,\,\,} U_{1,1}(-1)$& -1 & $\leq -4$ \\ 
\midrule
 (5)& $U(+1){\def\svgwidth{1,6ex}\,\,\,\,} U_{1}(-\frac{1}{-t-1}){\def\svgwidth{1,6ex}\,\,\,\,} U_{1,n}(-\frac{1}{2})$&$<-1$  &$n\geq 0$ \\
\midrule
 (6)&$U(+1){\def\svgwidth{1,6ex}\,\,\,\,} U_3(-1)$&$-2$  &$-2$ \\
\midrule
(7)& $U(+1){\def\svgwidth{1,6ex}\,\,\,\,} U_1(-\frac{1}{-t-2}){\def\svgwidth{1,6ex}\,\,\,\,} U_{1,2}(-1)$&$<-2$  &$-2$\\
 \midrule
(8)&$U(+1){\def\svgwidth{1,6ex}\,\,\,\,} U_{2}(-\frac{-1}{-n-2}){\def\svgwidth{1,6ex}\,\,\,\,} U_{2,1}(-1)$&$-2$  &$<-3$  \\
 \midrule
 (9)& $U(+1){\def\svgwidth{1,6ex}\,\,\,\,} U_1(-\frac{1}{-t-2}){\def\svgwidth{1,6ex}\,\,\,\,} U_{1,1}(-\frac{1}{-n-2}){\def\svgwidth{1,6ex}\,\,\,\,} U_{1,1,1}(-1)$ &$<-2$ &$\leq -3$   \\
  \midrule
 (10)&$U(+1)$&$-3$  &$-1$ \\
 \midrule
 (11)&$U(+1){\def\svgwidth{1,6ex}\,\,\,\,} U_1(-\frac{1}{-t-3})$&$<-3$  &$-1$\\
 \bottomrule
\end{tabular}
\label{tab:tab1}
\end{table}

\begin{proof}
By Lemma~\ref{lem:smooth_diagrams_RP3}, we know that $K$ is a Legendrian unknot $U$. Legendrian unknots are classified by their Thurston--Bennequin invariant $t\leq-1$ and rotation number $r$~\cite{Eliashberg_Fraser}. Moreover, the topological surgery coefficient is $\frac{2}{2n+1}$ for some $n\in \mathbb{Z}$ and also for all $n\in \mathbb{Z}$, topological $(\frac{2}{2n+1})$-surgery along an unknot gives $\RP$.
Therefore, $(\RP,\xi)$ has contact surgery number $1$ if and only if it is obtained by a contact surgery along a Legendrian unknot $U$, with contact surgery coefficient $r_c=(\frac{2}{2n+1}-t)$, for some $n\in \mathbb{Z}$. We observe that $r_c=0$ if and only if $(t,n)=(-2,-1)$. Since contact surgery is only defined if $r_c\neq0$ we do not consider the case $(t,n)=(-2,-1)$. Then the transformation lemma implies
\begin{align*}
    U(r_c)\cong
    U\left(\frac{2-2nt-t}{2n+1}\right)
    \cong U(+1){\def\svgwidth{1,6ex}\,\,\,\,} U\left(-\frac{t(2n+1)-2}{t(2n+1)+2n-1}\right),
\end{align*}
with the latter contact surgery coefficient negative unless $t=-1$ and $n<-1$. Using induction, we prove the following negative continued fraction expansions for $P(t,n)=-\frac{t(2n+1)-2}{t(2n+1)+2n-1}$, where $n\in \mathbb{Z}$ and $t\leq-1$.
\begin{itemize}
\item $P(t, n)=[\underbrace{-2,\cdots ,-2}_\text{$-t-1$},-n-2,-2]$ for $t\leq -1, n\geq 0$,
\item $P(t, n)=[\underbrace{-2,\cdots, -2}_\text{$-t-2$},-3,\underbrace{-2,
\cdots ,-2}_\text{$-n-3$}, -3]$ for $t\leq -2, n\leq -3$,
\item $P(t, n)=[\underbrace{-2, \cdots ,-2}_\text{$-t-2$},-4]$ for $t\leq -2, n=-2$, and 
\item $P(t, n)=[\underbrace{-2,\cdots ,-2}_\text{$-t-3$}]$ for $t< -3, n=-1$.
\end{itemize}
Applying again Lemma~\ref{lem:Kirby} yields the surgery descriptions $(1)$ and $(5)$--$(11)$ from Table~\ref{tab:tab1}.

Finally, we consider the case when $t=-1$ and $n\leq -1$. Using the transformation lemma whenever required, we deduce that
\begin{itemize}
    \item $U(r_c)\cong U(-1)$ for $n=t=-1$, 
    \item  $U(r_c)\cong U(\frac{1}{3})$ for $t=-1, n=-2$,
    \item $U(r_c)\cong U(\frac{1}{2}){\def\svgwidth{1,6ex}\,\,\,\,} U(-3)\cong U(\frac{1}{2}){\def\svgwidth{1,6ex}\,\,\,\,} U_2(-1)$ for $t=-1, n=-3$,
    \item $U(r_c)\cong U(\frac{1}{2}){\def\svgwidth{1,6ex}\,\,\,\,} U(\frac{2n+3}{-2n-5})$ for $t=-1, n\leq -4$. Note that for $n\leq -4$, $\frac{2n+3}{-2n-5}<0$ and with induction we show that $$\frac{2n+3}{-2n-5}=[-2,\underbrace{-2\cdots-2}_{|n|-4},-3].$$ Thus the translation lemma yields $U(r_c) \cong U(\frac{1}{2}){\def\svgwidth{1,6ex}\,\,\,\,} U_1(\frac{-1}{|n|-3}){\def\svgwidth{1,6ex}\,\,\,\,} U_{1,1}(-1)$ for $t=-1, n\leq -4$.
\end{itemize}
This yields the surgery descriptions $(0)$ and $(2)$--$(4)$ from Table~\ref{tab:tab1} and finishes the proof.
\end{proof}

\section{Computing the homotopical invariants}\label{sec:computing}

In this section, we will use Lemma~\ref{lem:d3} and~\ref{lem:Gamma} to compute the $\Gamma$- and the $\de_3$-invariants for all contact structures given by the surgery diagrams from Table~\ref{tab:tab1}. 
To effectively compare the $\Gamma$-invariants, we fix a spin structure $\mathfrak{s}_0$ on $\RP$ and compute all $\Gamma$-invariants with respect to $\mathfrak{s}_0$. For that, we consider the \textit{standard} surgery diagram of $\RP$ consisting of a single unknot with topological surgery coefficient $-2$. In that surgery diagram, the empty link is a characteristic sublink defining a spin structure $\mathfrak{s}_0$. If $\xi$ is a contact structure on $\RP$ then we define $\Gamma(\xi)\in\Z_2$ to be $\Gamma(\xi,\mathfrak{s}_0)\in H_1(\RP)\cong\Z_2$. (The other characteristic sublink is given by the whole link which defines another spin structure, say, $\mathfrak{s}_1$. It would also be possible to perform all calculations with respect to $\mathfrak{s}_1$, then the concrete values of $\Gamma$ would all change.)

\begin{lem}
    The possible values of the pairs of $(\Gamma,\de_3)$ of the contact structures given by the surgery diagrams from Table~\ref{tab:tab1} are as shown in Table~\ref{tab:tab2}.
\end{lem}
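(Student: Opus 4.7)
The plan is to treat each of the twelve cases from Table~\ref{tab:tab1} individually, computing both $d_3$ and $\Gamma$ in a uniform way and then collecting the results. In each case the Legendrian link carries stabilization parameters, namely the rotation numbers of the stabilized copies of the Legendrian unknot $U$, that vary independently subject to the inequality $|r_i|\leq -t_i-1$, so the output will be a family of pairs $(\Gamma,d_3)$ parameterised by these rotation numbers together with $t$ and $n$. The goal is to show that as the rotation parameters range over their admissible sets, this family collapses to precisely the finitely many arithmetic progressions listed in Table~\ref{tab:tab2}.

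For the $d_3$-computation I would apply Lemma~\ref{lem:d3} directly. In each case one writes down the generalized linking matrix $Q$ from the contact surgery coefficients and the linking numbers (the linking number of $U$ with a stabilized pushoff $U_{\cdots}$ equals $\tb(U)=t$), solves $Qb=r$ by hand, and substitutes into
\[
d_3=\tfrac14\sum_i\bigl(n_i r_i b_i+(3-n_i)\sign_i\bigr)-\tfrac34\sigma(Q).
\]
Since $Q$ and $r$ are explicit functions of the parameters, $d_3$ comes out as an explicit polynomial in $t$, $n$, and the rotation numbers $r_i$. The signature of $Q$ is easy to read off because, after a small number of row/column operations, $Q$ is close to upper-triangular.

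For the $\Gamma$-computation, Lemma~\ref{lem:Gamma} requires a contact $(\pm1)$-surgery presentation, so in those cases that still contain fractional coefficients $\pm1/k$ I would first convert to a $(\pm1)$-surgery diagram via the Replacement Lemma (Lemma~\ref{lem:Kirby}(2)). Applying Lemma~\ref{lem:Gamma} then gives $\Gamma(\xi,\mathfrak{s})$ once a characteristic sublink is specified. The critical step is to identify the characteristic sublink that represents the reference spin structure $\mathfrak{s}_0$ coming from the standard $(-2)$-surgery diagram of $\RP$. To do this I would undo the Ding--Geiges translations back to the standard diagram through smooth Kirby moves, tracking the characteristic sublink under blow-ups, blow-downs, handle slides, and Rolfsen twists using the last lemma of Section~\ref{sec:invariants}. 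Because this ambient reduction is independent of rotation numbers and stabilization choices, it needs to be performed only once per case.

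The main obstacle is organising these multi-parameter families cleanly. Each case produces a one- or two-parameter family of rotation vectors (depending on how many stabilizations are positive versus negative), and one must verify that, as the parameters range over their admissible sets, the computed $d_3$ values recover exactly the arithmetic progressions (shifted by $\tfrac14$ or $\tfrac34$) populating the rows of Table~\ref{tab:tab2}, with $\Gamma\in\Z_2$ correctly paired. The $\Gamma$-formula reduces modulo~$2$, so the principal care is in seeing which terms survive; the $d_3$-formula is a pure polynomial manipulation. Both are routine once the correct spin structure identification has been made, which is why I expect the Kirby-move bookkeeping for $\mathfrak{s}_0$ to be the most error-prone step of the argument.
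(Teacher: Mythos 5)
Your overall strategy is the same as the paper's: compute $d_3$ case by case from the generalized linking matrix via Lemma~\ref{lem:d3}, pass to a contact $(\pm1)$-surgery presentation, identify the characteristic sublink representing $\mathfrak{s}_0$ by running the smooth Kirby moves back to the standard $(-2)$-framed unknot while tracking the sublink, and then apply Lemma~\ref{lem:Gamma}. However, there is a genuine flaw in how you set up the parameter space. The rotation numbers of the components do \emph{not} vary independently subject to $|r_i|\leq -t_i-1$: in the diagrams of Table~\ref{tab:tab1} each later component is an iterated stabilization of a Reeb pushoff of an earlier one, so the admissible rotation vectors have the chained form $(r,\,r\pm1,\,r\pm1+x,\dots)$ with each step changing the rotation number by a controlled amount (e.g.\ in Case $(5)$ one has $\mathbf{r}=(r,r\pm1,r+1+x)$ with $|x|\le n$ and fixed parity). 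Treating the $r_i$ as independent enlarges the set of rotation vectors (e.g.\ $(r,r_2)=(2,-3)$ for $t=-3$ is allowed by your bounds but not realizable as $U\Pushoff U_1$), and since $d_3$ depends quadratically on the rotation vector through $Q^{-1}\mathbf{r}$, this would produce spurious pairs $(\Gamma,d_3)$ beyond Table~\ref{tab:tab2}; the lemma claims the possible values are \emph{exactly} those listed, and the downstream ``only if'' directions of Theorem~\ref{thm:main} depend on not overcounting. The chain constraints are also what make the final collapse work: writing $t\pm r=2k+1$ (the parity of $\tb+\rot$) and splitting by the parity of $k$ is how the formulas reorganize into the two $\Gamma$-classes and the quadratic-in-$m$ families of Table~\ref{tab:tab2}.

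A smaller caveat: while the characteristic sublink is indeed independent of the rotation numbers, it is not constant within a case — it depends on the parities of $t$ and $n$ (this happens in Cases $(1)$, $(4)$, $(5)$, $(9)$, $(11)$), so the ``once per case'' Kirby bookkeeping must be done parametrically, with the membership of components in the sublink recorded as a function of these parities before reducing the $\Gamma$-formula modulo $2$. With the chained rotation vectors and this parity dependence taken into account, your plan does coincide with the paper's argument.
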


\begin{table}[htbp]
    \caption{The $\Gamma$- and $d_3$-invariants for the cases from Table \ref{tab:tab1}}
    \label{tab:tab2}
\begin{tabular}{ c|l} 
Case & $(\Gamma,d_3$)\\
\toprule
(0)&$\left(0,\frac{1}{4}\right)$\\
\midrule
  (1)& $\left(0,\frac{1}{4}\right)$ \\
    \midrule
 (2)& $\left(1,\frac{3}{4}\right)$\\
 \midrule
(3)& $\left(0,1+\frac{1}{4}\right)$, $\left(1,\frac{3}{4}\right)$ \\
\midrule
(4) & $\left(0,1+\frac{1}{4}\right),\,\left(1,\frac{3}{4}\right)$ \\ 
\midrule
 \multirow{8}{1.4em}{(5)}& $\left(0,2m^2(2n+1)+4n+2m(4n+1)\pm2x( m+1)+\frac{1}{4}\right)$,\\
&for $n\geq0$, $m< -1$, and \\
& $x =2i$, for $i=0,1,\ldots, \frac{n}{2}$ if  $n$ is even, and\\
& $x=(2i+1)$, for $i=0,1,\ldots, \left\lfloor \frac{n}{2}\right\rfloor$ if $n$ is odd;\\
 & $\left(1,2m^2(2n+1)+n(4m+1)-1\pm x( 2m+1)+\frac{3}{4}\right)$,\\
 &for $n\geq0$, $m\leq -1$, and \\
& $x =2i$, for $i=0,1,\ldots, \frac{n}{2}$ if  $n$ is even, and\\
& $x=(2i+1)$, for $i=0,1,\ldots, \left\lfloor \frac{n}{2}\right\rfloor$ if $n$ is odd\\
\midrule
  (6)& $(0,1+\frac{1}{4}),(0,-1+\frac{1}{4}),(1,
 \frac{3}{4})$\\
\midrule
\multirow{4}{1.4em}{(7)}  &$\left(0,-6m^2-14m-7+\frac{1}{4}\right)$, $\left(0,-6m^2-6m-1+\frac{1}{4}\right)$\\
&$\left(0,-6m^2-10m-3+\frac{1}{4}\right), \left(1,-6m^2-16m-10+\frac{3}{4}\right)$,\\
&$\left(1, -6m^2-8m-2+\frac{3}{4}\right)$, $\left(1,-6m^2-12m-6+\frac{3}{4}\right)$, \\ &for $m\leq -1$\\
\midrule
\multirow{2}{1.4em}{(8)}  & $ \left(0,1+\frac{1}{4}\right),\, \left(0,2n+3+\frac{1}{4}\right),\, \left(1,\frac{3}{4}\right), \,\left(1,2n+4+\frac{3}{4}\right)$,\\
 &for $n<-3$\\
 \midrule
 \multirow{8}{1.4em}{(9)}&  $\left(0,2m^2(1+2n)+2m(n-1)-1+\frac{1}{4}\right),$ \\
 &  $\left(0,2m^2(1+2n)+2m(3n-1)+2n-3+\frac{1}{4}\right),$\\
 &  $\left(0, 2m^2(1+2n)+2m(3n+1)+2n+1+\frac{1}{4}\right),$\\
  &$\left(0, 2m^2(1+2n)+2m(5n+3)+6n+5+\frac{1}{4}\right)$,\\
 &$\left(1,2m^2(1+2n)+6mn+2n-2+\frac{3}{4}\right),$\\
  &$\left(1,2m^2(1+2n)+2m(n-2)-2+\frac{3}{4}\right)$,\\
  &$\left(1, 2m^2(1+2n)+2m(5n+2)+6n+2+\frac{3}{4}\right),$\\
 &  $\left(1, 2m^2(1+2n)+2m(3n+2)+2n+2+\frac{3}{4}\right),$\\
 &for $m\leq -1$ and $n\leq-3$\\
  \midrule
 (10)&$\left(0, 1+\frac{1}{4}),\,(1,\frac{3}{4}\right)$\\
 \midrule
 \multirow{2}{1.4em}{(11)} &  $\left(0,-2m^2-4m-1+\frac{1}{4}\right),\, \left(1,-2m^2-6m-4+\frac{3}{4}\right)$,\\
&for $m\leq -1\:$\\
 \bottomrule
\end{tabular}
\end{table}

\begin{proof}
In each of the $12$ cases from Table~\ref{tab:tab1}, we first use Lemma~\ref{lem:d3} to compute the possible values of the $\de_3$-invariant. For computing the $\Gamma$-invariant we then proceed by using the transformation lemma to convert the rational contact surgery diagram into a contact $(\pm1)$-surgery diagram $L$. Then we perform smooth Kirby calculus to transform $L$ into the standard surgery diagram of $\RP$. Following these Kirby moves backwards we can describe the characteristic sublink of $L$ that corresponds to $\mathfrak s_0$. In the surgery diagrams below we mark the components of the characteristic sublinks with a star. To distinguish between different cases, we use stars of different colors. For example, in Figure~\ref{fig:c1}, the characteristic sublink differs depending on whether $n$ is odd or even. 

In Figures~\ref{fig:c1}--\ref{fig:c12}, starting from the left we have the contact $(\pm1)$-surgery diagram, then the corresponding topological surgery diagram with the characteristic sublink marked followed by a sequence of Kirby moves connecting it to the standard surgery diagram. In these figures, $RT_n$ stands for an $n$-fold Rolfsen twist, and $BD$ for a blow down. Once we have the characteristic sublink describing $\mathfrak s_0$, we use Lemma~\ref{lem:Gamma} to compute the $\Gamma$-invariant.

Next, we consider the twelve cases separately. We always denote by $t$ and $r=r_1$ the Thurston--Bennequin invariant and rotation number of $U$, the first Legendrian knot in the surgery description, and by $r_i$ and $\mu_i$ the rotation number and meridian of the $i^{th}$ knot in the surgery description.

\noindent \textbf{Case (0):} This is contact $(-1)$-surgery on a single Legendrian knot with $t=-1$ and thus the computation of the $\de_3$-invariant is straightforward. Since topologically the surgery diagram is just an unknot with topological surgery coefficient $-2$, the characteristic sublink corresponding to $\mathfrak{s}_0$ is empty. Because the rotation number $r$ of $U$ is zero, it follows that $\Gamma=0$.

\noindent \textbf{Case (1):} In this case the generalized linking matrix is
 \begin{align*}
    Q=\begin{pmatrix}
        0&-2\\
        -1&-5-2n
    \end{pmatrix}.
    \end{align*}
It has vanishing signature and $Q^{-1}\mathbf{r}=(-r_2,0)$ for $r_2$ the rotation number of $U_{n+1}$, from which it is straightforward to compute $d_3=\frac{1}{4}$.

Using the transformation lemma we write this contact surgery diagram as 
$$U(+1){\def\svgwidth{1,6ex}\,\,\,\,} U_{n+1}(-1){\def\svgwidth{1,6ex}\,\,\,\,} U_{n+1}(-1),$$ 
see Figure~\ref{fig:c1}. 
Here, we see that $r=0$ and $r_2=r_3$ have opposite parity as $n$. From the linking matrix, we get a presentation for the first homology from which we deduce that $\mu_2=\mu_3$ is a generator and thus we compute
\begin{align*}
\Gamma=\begin{cases}
    \frac{1}{2}\big(r_2\mu_2+r_3\mu_3-\mu_2-\mu_3\big)=(r_2-1)\,\mu_2=0, & \textrm{ if } n  \textrm{ is even, }\\
    \frac{1}{2}\big(r_2\mu_2+r_3\mu_3\big)=r_2\,\mu_2=0, & \textrm{ if } n \textrm{ is odd. }
\end{cases}
\end{align*}

Alternatively, in the proof of Lemma~\ref{lem:tight}, we will also use contact Kirby moves to show that all contact surgery diagrams from Case $(1)$ yields the same contact structure as Case $(0)$, which explains why we get the same values for the homotopical invariants.
\begin{figure}[htbp]
    \centering
    \includegraphics[width=1\linewidth]{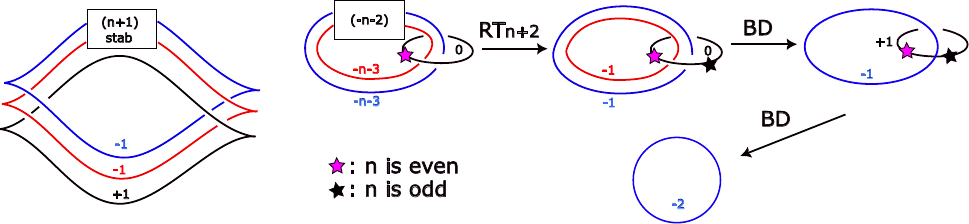}
    \caption{Case (1)}
     \label{fig:c1}
\end{figure}

 \noindent \textbf{Case (2):}  This is contact $(1/3)$-surgery on a single Legendrian knot and thus the computation of the $\de_3$-invariant is straightforward. Using the transformation lemma, we write this contact surgery diagram as 
$$U(+1){\def\svgwidth{1,6ex}\,\,\,\,} U(+1){\def\svgwidth{1,6ex}\,\,\,\,} U(+1),$$
see Figure~\ref{fig:c3}.
In this case, all rotation numbers are vanishing. From the linking matrix we deduce that $\mu_1=-\mu_3=\mu_2$ is a generator, and in Figure~\ref{fig:c3} we see that the characteristic sublink is the whole link. Thus, we compute
\begin{align*}
\Gamma=-\big(\mu_1+\mu_2+\mu_3\big)=1.
\end{align*}
\begin{figure}[htbp]
    \centering
    \includegraphics[width=1\linewidth]{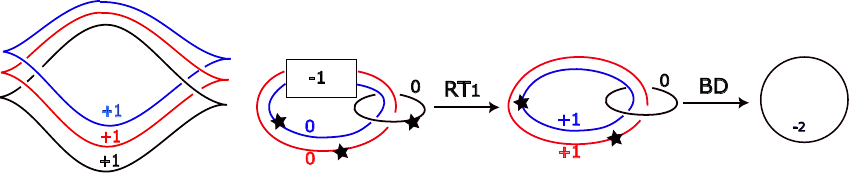}
    \caption{Case (2)}
    \label{fig:c3}  
\end{figure}

 \noindent \textbf{Case (3):} Here the generalized linking matrix is
 \begin{align*}
    Q=\begin{pmatrix}
        -1&-1\\
        -2&-4
    \end{pmatrix}
    \end{align*}
with signature $\sigma(Q)=-2$. The rotation number $r_2$ of $U_2$ can take values $r_2\in\{-2,0, 2\}$, from which we compute the possible values of $\de_3$. 

Using the transformation lemma, we write this contact surgery diagram as 
$$U(+1){\def\svgwidth{1,6ex}\,\,\,\,} U(+1){\def\svgwidth{1,6ex}\,\,\,\,} U_2(-1),$$ 
with $t=-1$, see Figure~\ref{fig:c4}.
In this case, $r_1=r_2=0$ and $r_3\in\{0,\pm2\}$, the characteristic sublink is empty, and $\mu_3$ is a generator. Thus we compute the $\Gamma$-invariant as follows:
\begin{align*}
\Gamma=\frac{1}{2}\big(r_3\mu_3\big)=\begin{cases}
    0, & \textrm{ if } r_3=0,  \\
    1, & \textrm{ if } r_3=\pm2. 
\end{cases}
\end{align*}
\begin{figure}[htbp]
    \centering
    \includegraphics[width=1\linewidth]{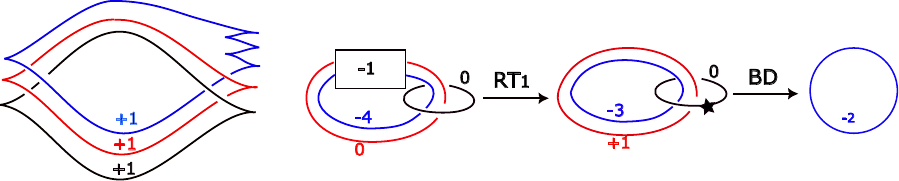}
    \caption{Case (3)}
    \label{fig:c4}
\end{figure}

 \noindent \textbf{Case (4):} In this case, the generalized linking matrix is
 \begin{align*}
    Q=\begin{pmatrix}
        -1&-|n|+3&-1\\
        -2&-2|n|+5&-2\\
        -2&-2|n|+6&-4
    \end{pmatrix}
    \end{align*}
with signature $\sigma(Q)=-3$. The rotation numbers $r_2$ of $U_1$ and $r_3$ of $U_{1,1}$ can take values $r_2=\pm 1$ and $r_3=r_2\pm 1$ from which we compute the possible values of $\de_3$. 

Using the transformation lemma we write this contact surgery diagram as 
$$U(+1){\def\svgwidth{1,6ex}\,\,\,\,} U(+1){\def\svgwidth{1,6ex}\,\,\,\,} \underbrace{U_1(-1){\def\svgwidth{1,6ex}\,\,\,\,} \cdots {\def\svgwidth{1,6ex}\,\,\,\,} U_1(-1)}_\text{$|n|-3$}{\def\svgwidth{1,6ex}\,\,\,\,} U_{1,1}(-1),$$
see Figure~\ref{fig:c5}.
From the linking matrix, we deduce that $\mu_{-n}$ is a generator of the first homology and that
$$\mu_1=\mu_2=-\mu_{-n},\: \mu_3=\mu_4,\ldots,\mu_{-n-1}=2\mu_{-n}=0.$$
The characteristic sublink depends on the parity of $n$. Nevertheless, in both cases the $\Gamma$ invariant computes as
\begin{align*}
    \Gamma=\frac{r_{-n}}{2}\mu_{-n}=\begin{cases}
        0 \text{ if } r_{-n}=0, \\
1 \text{ if } r_{-n}=\pm 2.
    \end{cases}
\end{align*}
\begin{figure}[htbp]
    \centering
    \includegraphics[width=1\linewidth]{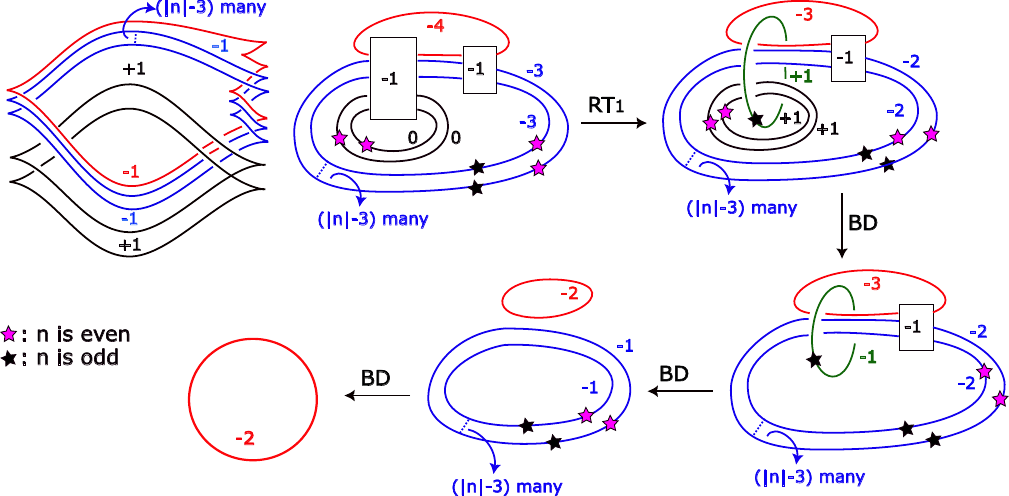}
    \caption{Case (4)}
    \label{fig:c5}
\end{figure}

 \noindent \textbf{Case (5):}  The generalized linking matrix is
 \begin{align*}
    Q=\begin{pmatrix}
        1+t&-t^2-t&2t\\
           t &-t^2&2t-2\\
            t&-t^2+1&2t-2n-3
    \end{pmatrix}
    \end{align*}
with signature $\sigma(Q)=-1$. The possible values of the rotation vectors are 
$$\mathbf{r}=(r,r\pm1,r+1+ x)^T$$
where
\begin{equation*}
  x =
    \begin{cases}
      2i, & \text{for } i=-\frac{n}{2},-\frac{n}{2}+1\ldots, \frac{n}{2}  \text{ if  $n$ is even,}\\
       2i+1, & \text{for } i=-\left\lfloor \frac{n}{2}\right\rfloor,-\left\lfloor \frac{n}{2}\right\rfloor+1,\ldots, \left\lfloor \frac{n}{2}\right\rfloor \text{ if $n$ is odd.}
    \end{cases}
\end{equation*}
Then we compute
\begin{align*}
    d_3= \frac{1}{8}\left\{(2n+1)(t\pm r)^2+(4n-2)(t\pm r)\mp 4x(t\pm r+1)+2n-1\right\}.
\end{align*}
Since the sum of rotation number and Thurston--Bennequin invariant of a Legendrian knot is always an odd integer~\cite{Geiges_book}, we can write $t\pm r=2k+1$ for some $k\leq -1$. Using this substitution, we get the claimed values for $\de_3$.

Using the transformation lemma we rewrite this contact surgery diagram as 
$$U(+1){\def\svgwidth{1,6ex}\,\,\,\,} \underbrace{U_1(-1){\def\svgwidth{1,6ex}\,\,\,\,} \cdots {\def\svgwidth{1,6ex}\,\,\,\,} U_1(-1)}_\text{$-t-1$}{\def\svgwidth{1,6ex}\,\,\,\,} U_{1,n}(-1){\def\svgwidth{1,6ex}\,\,\,\,} U_{1,n}(-1),$$ 
see Figure~\ref{fig:c6}.
In this case, the linking matrix shows that $\mu_{-t+1}$ is a generator and 
$$ \mu_2=\ldots=\mu_{-t+1},\mu_{-t+1}=-\mu_{-t+2}, \mu_1=(t-2)\mu_{-t+1}.$$
The characteristic sublink, depending on the parities of $n$ and $t$, is shown in Figure~\ref{fig:c6}. Thus we obtain from Lemma~\ref{lem:Gamma} that
\begin{align*}
    \Gamma= \begin{cases}
        \frac{r+t+1}{2}\mu_{-t+1} & \textrm{ if $t$ is odd,}\\
        \frac{r_2+t}{2}\mu_{-t+1} & \textrm{ if $t$ is even.}
    \end{cases}
\end{align*}
By setting $r_2=r\pm1$, and substituting $t\pm r=2k+1$, we get $\Gamma=k+1\, \pmod{2}$. Now, to distinguish between parities of $k$, we write $k=2m+1$ if $k$ is odd and thus $\Gamma=0$ and $k=2m$ if $k$ is even and thus $\Gamma=1$. Doing the same in the formula for the $\de_3$-invariants we obtain the claimed pairs of the invariants. 

Note that in the case that $k$ is odd, $m=-1$ is possible. However, in Table~\ref{tab:tab2} we have only listed the values for $m<-1$. This is justified, because whenever in that case $m=-1$, it follows that, independent of the other parameters, we get $\de_3=1/4$. Indeed, we show below in Lemma~\ref{lem:tight} that for $m=-1$ and $k$ odd, we always get the tight contact structure $\xist$ which we already obtained in Case $(0)$. Thus we do not list the case $m=-1$ again in Case $(5)$.
\begin{figure}[htbp]
    \centering
    \includegraphics[width=0.99\linewidth]{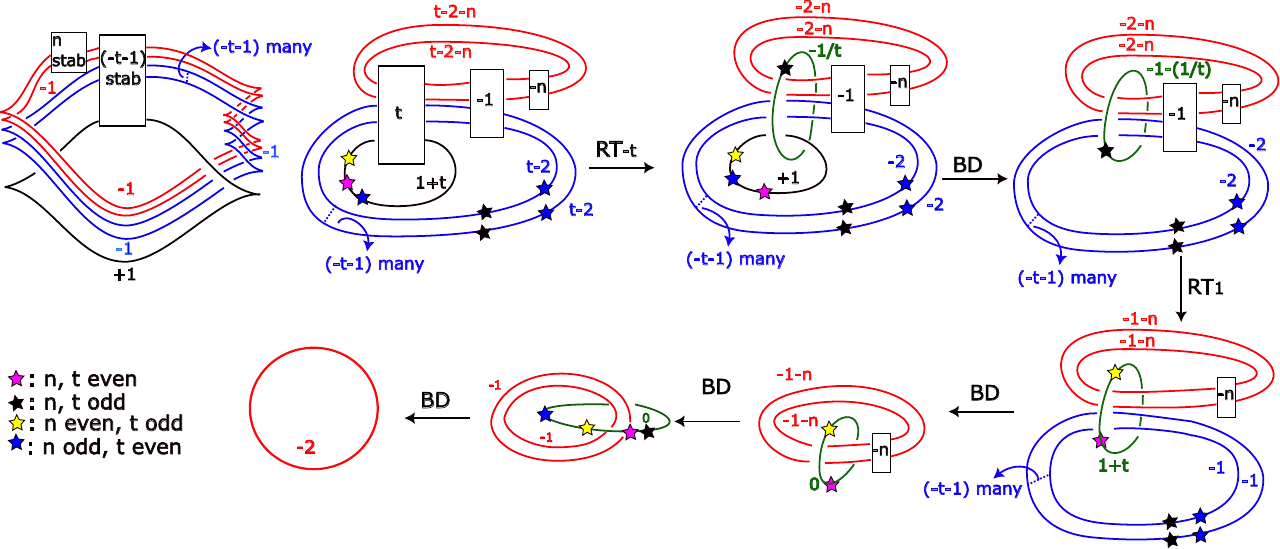}
    \caption{Case (5)}
    \label{fig:c6}
\end{figure}

 \noindent \textbf{Case (6):}  The generalized linking matrix is
 \begin{align*}
    Q=\begin{pmatrix}
        -1&-2\\
        -2 &-6
    \end{pmatrix}
    \end{align*}
with signature $\sigma(Q)=-2$. The rotation numbers of $U$ and $U_3$ can take values $r=\pm1$ and $r+x$, where $x=\pm 1, \pm 3$, from which we compute the $\de_3$-invariants.

Using the transformation lemma we write this contact surgery diagram as 
$$U(+1){\def\svgwidth{1,6ex}\,\,\,\,} U_3(-1),$$ 
see Figure~\ref{fig:c7}.
Here we compute similarly to the previous cases that 
\begin{align*}
    \Gamma=\begin{cases}
        0 \text{ if } r_2=\pm 2, \\
        1\text{ if } r_2=0,\pm 4.
    \end{cases} 
\end{align*}
\begin{figure}[htbp]
    \centering
    \includegraphics[width=0.7\linewidth]{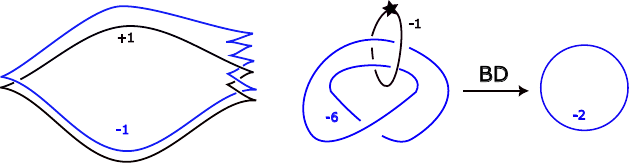}
    \caption{Case (6)}
    \label{fig:c7}
\end{figure}
    
 \noindent \textbf{Case (7):}  The generalized linking matrix is
 \begin{align*}
    Q=\begin{pmatrix}
        1+t&-t^2-2t&t\\
        t &-t^2-t+1& t-1\\
        t& -t^2-t+2&t-4
    \end{pmatrix}
    \end{align*}
with signature $\sigma(Q)=-3$. The possible values of the rotation vectors are 
$$\mathbf{r}=(r,r\pm1,r+1+ x)^T$$
where $x$ can take values $-2$, $0$, or $2$. And similar to Case $(5)$, we can compute the values of the $\de_3$-invariants.

Using the transformation lemma we write this contact surgery diagram as 
$$U(+1){\def\svgwidth{1,6ex}\,\,\,\,} \underbrace{U_1(-1){\def\svgwidth{1,6ex}\,\,\,\,} \cdots {\def\svgwidth{1,6ex}\,\,\,\,} U_1(-1)}_\text{$-t-2$}{\def\svgwidth{1,6ex}\,\,\,\,} U_{1,2}(-1),$$ 
see Figure~\ref{fig:c8}.
In this case, $\mu_{-t}$ is a generator and we have the relations
$$2\mu_{-t}=0, \mu_1=3t\mu_{-t},\mu_2=\ldots=\mu_{-t}$$
from which we compute
\begin{align*}
    \Gamma
    =\frac{(r+1+t)}{2}t\mu_{-t}-\frac{(r_2+t)}{2}t\mu_{-t}+\frac{(t+r_{-t})}{2}\mu_{-t}.
\end{align*}
If we set $2k+1=t\pm r$, $r_2=r\pm1$, and $r_{-t}=r\pm 1+x$, for $x=0,\pm 2$, this yields
\begin{align*}
    \Gamma=k+1+\frac{x}{2}\, \pmod{2}.
\end{align*}
By considering the different parities of $k$ as in Case $(5)$ we obtain the claimed pairs of invariants.
\begin{figure}[htbp]
    \centering
    \includegraphics[width=0.9\linewidth]{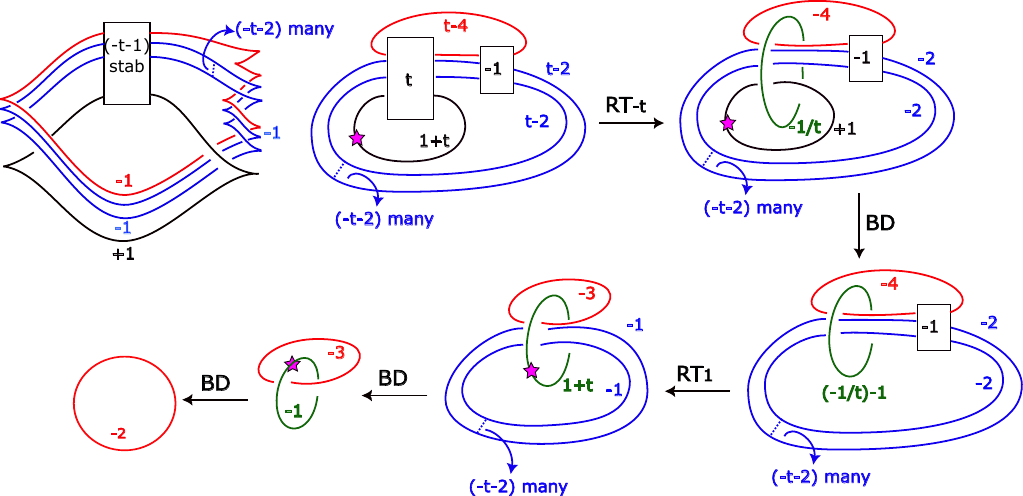}
    \caption{Case (7)}
    \label{fig:c8}
\end{figure}

 \noindent \textbf{Case (8):}  The generalized linking matrix is
 \begin{align*}
    Q=\begin{pmatrix}
        -1&2n+4&-2\\
        -2 &4n+7& -4\\
        -2&4n+8&-6
    \end{pmatrix}
    \end{align*}
with signature $\sigma(Q)=-3$. The possible values of the rotation vectors are 
$$\mathbf{r}=(r,r+x,r+x+y)^T$$
where $x$ can take values $-2$, $0$, or $2$ and $y=\pm1$ from which we compute the $\de_3$-invariants.

Using the transformation lemma we write this contact surgery diagram as 
$$U(+1){\def\svgwidth{1,6ex}\,\,\,\,} \underbrace{U_2(-1){\def\svgwidth{1,6ex}\,\,\,\,} \cdots {\def\svgwidth{1,6ex}\,\,\,\,} U_2(-1)}_\text{$-n-2$}{\def\svgwidth{1,6ex}\,\,\,\,} U_{2,1}(-1),$$ 
see Figure~\ref{fig:c9}.
From the linking matrix, we deduce that $mu_{-n}$ is a generator and 
$$2\mu_{-n}=0=\mu_1=\ldots=\mu_{-n-1},$$
from which we compute
\begin{align*}
    \Gamma
       =\frac{(r_{-n}+6)}{2}\mu_{-n}=\begin{cases}
           1 \text{ if } r_{-n}=0, \pm 4,\\
            0 \text{ if } r_{-n}=\pm 2.
       \end{cases}
\end{align*}
By considering the different values of $r_{-n}$ we obtain the claimed pairs of invariants.
\begin{figure}[htbp]
    \centering
    \includegraphics[width=0.9\linewidth]{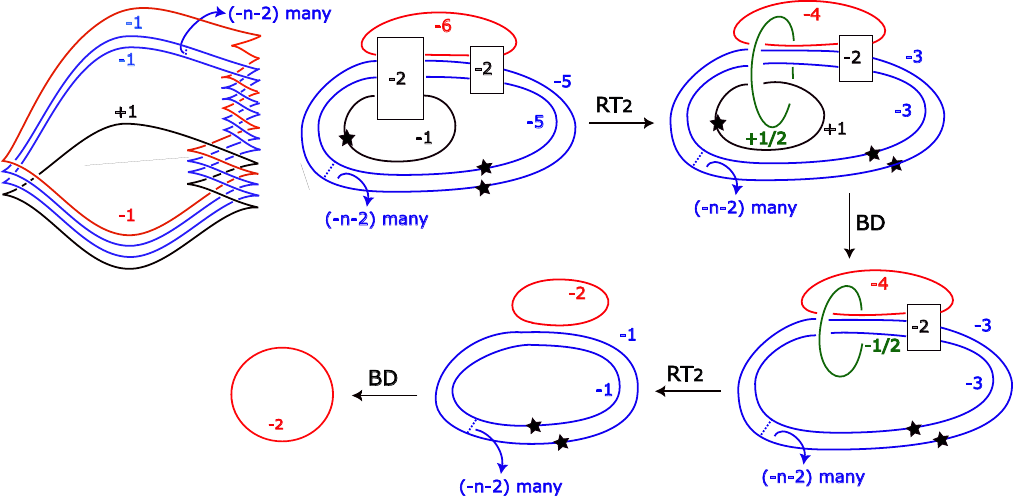}
    \caption{Case (8)}
    \label{fig:c9}
\end{figure}
 \noindent \textbf{Case (9):}  The generalized linking matrix is
 \begin{align*}
    Q=\begin{pmatrix}
       1+t&-t^2-2t&-nt-2t&t\\
        t&-t^2-t+1&-nt-2t+n+2&t-1\\
        t&-t^2-t+2&2n-2t-nt+3&t-2\\
        t&-t^2-t+2&-nt-2t+2n+4&t-4
    \end{pmatrix}
    \end{align*}
with signature $\sigma(Q)=-4$. The possible values of the rotation vectors are 
$$\mathbf{r}=(r,r+x,r+x+y,r+x+y+z)^T$$
where $x$, $y$, and $z$ each can take values $\pm1$. By substituting $t\pm r=2k+1$ as in Case $(5)$, we compute the claimed values for the $\de_3$-invariants.

Using the transformation lemma we write this contact surgery diagram as 
$$U(+1){\def\svgwidth{1,6ex}\,\,\,\,} \underbrace{U_1(-1){\def\svgwidth{1,6ex}\,\,\,\,} \cdots {\def\svgwidth{1,6ex}\,\,\,\,} U_1(-1)}_\text{$-t-2$}{\def\svgwidth{1,6ex}\,\,\,\,} \underbrace{U_{1,1} {\def\svgwidth{1,6ex}\,\,\,\,} \cdots {\def\svgwidth{1,6ex}\,\,\,\,} U_{1,1}(-1)}_{-n-2} {\def\svgwidth{1,6ex}\,\,\,\,} U_{1,1,1}(-1),$$ 
see Figure~\ref{fig:c10}.
From the linking matrix, we read off that $\mu_{-t-n-2}$ is a generator and  
\begin{align*}
0=\mu_{-t}=\ldots=\mu_{-t-n-3},\: \mu_2=\ldots=\mu_{-t-1}=\mu_{-t-n-2},\: \mu_1=t\mu_{-t-n-2}.
\end{align*}
Then from Lemma~\ref{lem:Gamma} it follows that
\begin{align*}
    \Gamma
     =\begin{cases}
         \left(\frac{r+1-t}{2}t-\frac{r_2-t}{2}t+t+\frac{(r_{-t-n-2}-t)}{2}\right)\mu_{-t-n-2}, & \textrm{ if $n$ is even,}\\
         \left(-\frac{r_2-r-3t+3+n}{2}t-\frac{t(n+t)}{2}+\frac{r_{-t-n-2}-3t}{2}\right)\mu_{-t-n-2}, & \textrm{ if $n$ and $t$ are odd,}\\
         \left(-\frac{t}{2}n+\frac{r_{-t-n-2}}{2}\right)\mu_{-t-n-2}, & \textrm{ if $n$ is odd and $t$ even.}\\
     \end{cases}
\end{align*}
Next, we write $r_2=r+x$, $r_{-t}=r+x+y$, and $r_{-t-n-2}=r+x+y+z$, for $x,y,z \in \{1,-1\}$, and substitute $t+xr=2k+1$. This yields
\begin{align*}
    \Gamma=\begin{cases}
        k\:\pmod{2} \:&\text{ if }  y=z, \\
       (k+1)\: \pmod{2} \:&\text{ if } y=-z.\\
        \end{cases}
\end{align*}
By considering the different parities of $k$, as in Case $(5)$, we obtain the claimed pairs of invariants.
\begin{figure}[htbp]
    \centering
    \includegraphics[width=1\linewidth]{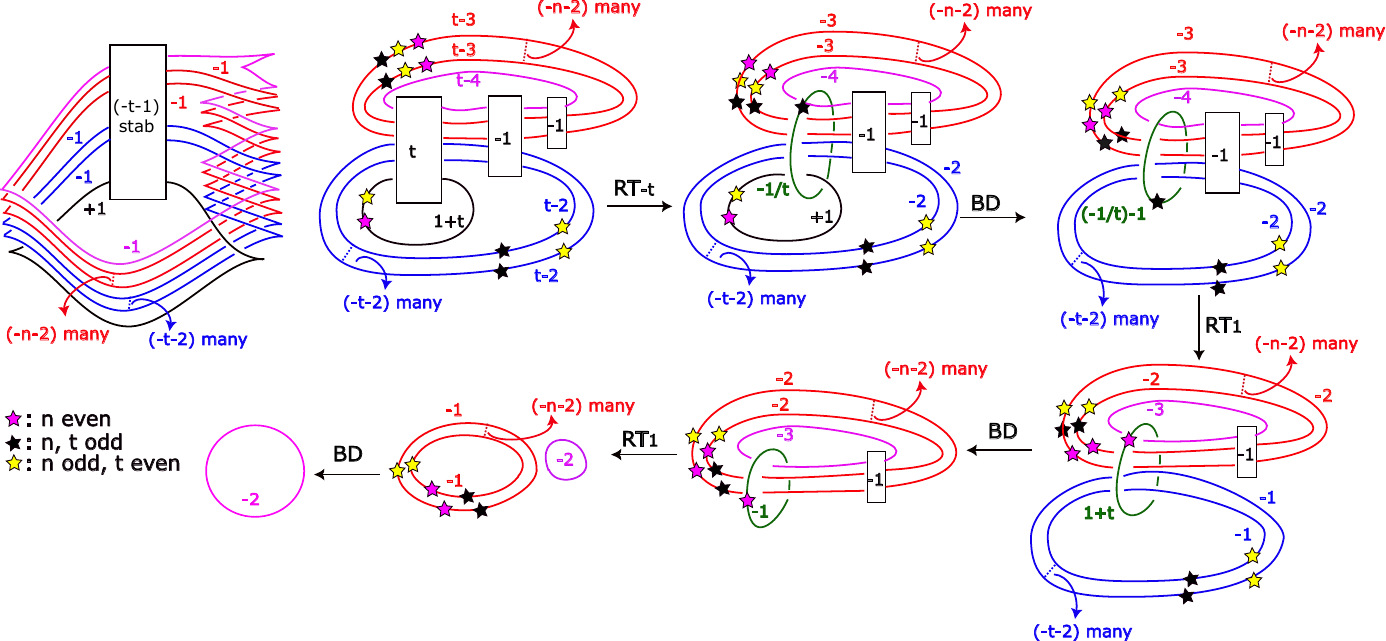}
    \caption{Case (9)}
    \label{fig:c10}
\end{figure}

 \noindent \textbf{Case (10):}  This case is contact $(+1)$-surgery on a single Legendrian knot and thus the $\de_3$-invariant is straightforward to compute.

Using the transformation lemma we write this contact surgery diagram as 
$U(+1)$ with $t=-3$, 
see Figure~\ref{fig:c11}.
This is contact $(+1)$-surgery on a single Legendrian knot and thus we compute straightforward that
\begin{align*}
    \Gamma=\begin{cases}
        0 \text{ if $r=0$, } \\
        1 \text{ if $r=\pm 2$.}
    \end{cases} 
\end{align*}
\begin{figure}[htbp]
    \centering
    \includegraphics[width=.3\linewidth]{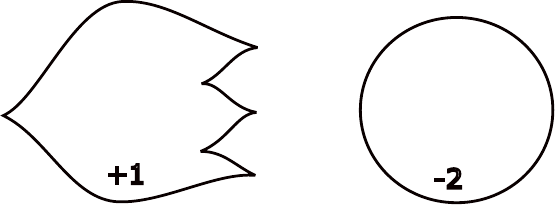}
    \caption{Case (10)}
    \label{fig:c11}
\end{figure}

 \noindent \textbf{Case (11):}  The generalized linking matrix is
 \begin{align*}
    Q=\begin{pmatrix}
       1 +t      & -t^2-3t  \\
       t  & -t^2-2t+2
    \end{pmatrix}
    \end{align*}
with signature $\sigma(Q)=-2$. The rotation vector is 
$\mathbf{r}=(r,r\pm 1)^T$. By substituting $t\pm r=2k+1$ as in Case $(5)$, we compute the claimed values for the $\de_3$-invariants.

Using the transformation lemma we write this contact surgery diagram as 
$$U(+1){\def\svgwidth{1,6ex}\,\,\,\,} \underbrace{U_1(-1){\def\svgwidth{1,6ex}\,\,\,\,} \cdots {\def\svgwidth{1,6ex}\,\,\,\,} U_1(-1)}_\text{$-t-3$},$$ 
see Figure~\ref{fig:c12}.
We get that $\mu_{-t-2}$ is a generator with relations
$$ \mu_1=t\mu_{-t-2},\: 2\mu_{-t-2}=0,\: \mu_2=\ldots=\mu_{-t-2}$$
and thus we compute
\begin{align*}
    \Gamma=\begin{cases}
        \frac{t+r+3}{2} \mu_{-t-2} &\textrm{ if $t$ is odd,}\\
        \frac{(t+r_2+2)}{2}\mu_{-t-2} &\textrm{ if $t$ is even.}
    \end{cases}
\end{align*}
By writing $r_2=r\pm1$ and $t\pm r=2k+1$, we get $\Gamma=k\,\pmod{2}$. Then by considering the different parities of $k$ as in Case $(5)$ we obtain the claimed pairs of invariants.
\end{proof}
\begin{figure}[htbp]
    \centering
    \includegraphics[width=0.9\linewidth]{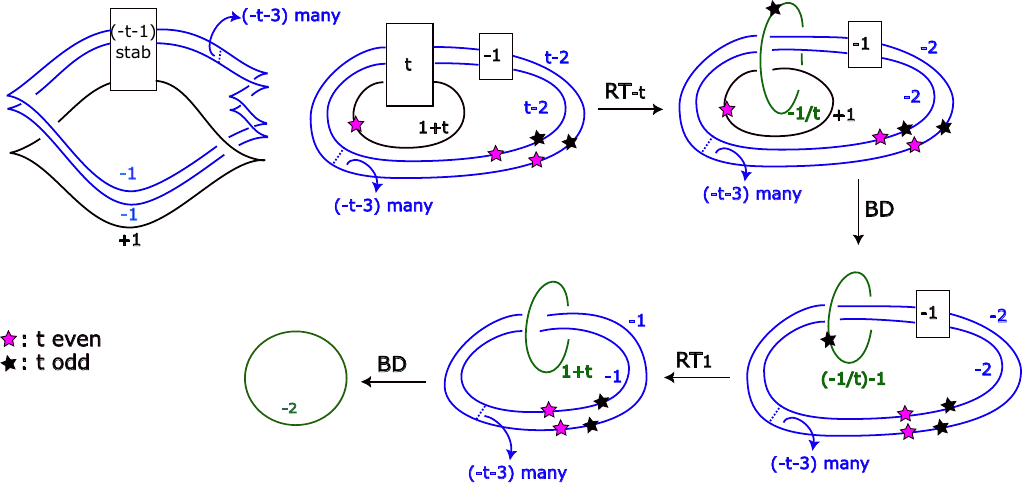}
    \caption{Case (11)}
    \label{fig:c12}
\end{figure}

\section{Tight and overtwisted contact structures}

\begin{lem}\label{lem:tight}
    Among the contact surgery descriptions from Table~\ref{tab:tab1} we get the tight contact structure $\xist$ on $\RP$ exactly in Case $(0)$ and $(1)$, and if $m=-1$ and all stabilizations of the first two knots are of the same sign also in Case $(5)$. All other contact surgery descriptions from Table~\ref{tab:tab1} yield overtwisted contact structures.
\end{lem}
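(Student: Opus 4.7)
The plan is to separate the argument into an invariant-comparison step and a Kirby-calculus step. First I would exploit the fact that $\xist$ is the unique tight contact structure on $\RP$ (with invariants $(\Gamma,\de_3) = (0,\tfrac{1}{4})$, as computed in Case (0)): any contact structure on $\RP$ whose pair $(\Gamma,\de_3)$ differs from $(0,\tfrac{1}{4})$ cannot be contactomorphic to $\xist$ and is therefore overtwisted. A direct inspection of Table~\ref{tab:tab2} then shows that the pair $(0,\tfrac{1}{4})$ occurs only in Cases (0), (1) and in the formula of Case (5) when $m=-1$ together with $\Gamma=0$; indeed at $m=-1$ the correction factor $\pm 2x(m+1)$ vanishes and the remaining terms in the $\de_3$-formula telescope to $0$. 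Unpacking the substitutions $t \pm r = 2k+1$ and $k = 2m+1$ made in the proof of Case (5), the choice $m=-1$ corresponds to $|r| = |t|-1$, i.e.\ all stabilizations of $U$ are of the same sign; the additional requirement $\Gamma = 0$ then forces the extra stabilization producing $U_1$ to have the same sign. Consequently all remaining entries of Table~\ref{tab:tab2} correspond to overtwisted contact structures.

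Next I would verify that the three identified cases actually produce $\xist$, rather than an overtwisted representative with matching invariants, by reducing each of their surgery diagrams to a purely negative contact surgery diagram (which is Stein-fillable and hence tight). Case (0) is already contact $(-1)$-surgery on a Legendrian unknot, and so manifestly Stein fillable. For Case (1), the replacement lemma rewrites $U_{n+1}(-\tfrac{1}{2})$ as two parallel copies of $U_{n+1}(-1)$; a contact handle cancellation of $U(+1)$ against one of these negatively surgered stabilized pushoffs removes both components, leaving a single contact $(-1)$-surgery on a Legendrian unknot. The same philosophy handles Case (5) at $m=-1$ with same-sign stabilizations: expanding the rational surgeries on $U_1$ and $U_{1,n}$ by the replacement lemma gives a surgery link consisting of $U(+1)$ together with several parallel $(-1)$-surgeries on stabilized pushoffs of $U$, and iterated contact handle cancellations then absorb $U(+1)$ against one of these negative components, producing a purely negative (hence Stein-fillable) contact surgery diagram.

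The main technical obstacle is the cascade of contact handle cancellations required in Case (5). At every intermediate step one must track the stabilizations carried by the surviving components and verify that a Ding--Geiges-type cancellation remains applicable. This is precisely where the hypothesis that all stabilizations on $U$ and the additional stabilization of $U_1$ are of the same sign is essential: without it, residual stabilizations obstruct the cancellation and the diagram cannot be reduced to an all-negative one, consistently with the fact that the remaining parameter choices in Case (5) yield overtwisted contact structures.
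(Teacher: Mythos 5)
Your first step is fine and is genuinely different from the paper's route: instead of comparing invariants case by case, the paper disposes of Cases $(2)$--$(4)$ and $(6)$--$(11)$ all at once by the criterion (Theorem~3.1 of~\cite{CK_contact_surgery_numbers}) that contact $r_c$-surgery on a Legendrian unknot with $0<r_c<-\tb$ is overtwisted, which applies exactly when $n<0$, and only uses the invariant comparison for Case $(5)$. Your version works, but "direct inspection of Table~\ref{tab:tab2}" hides real content: you must check that none of the quadratic expressions (Cases $(7)$, $(9)$, $(11)$ in particular) ever vanishes for the allowed integer parameters (they do not, by parity and discriminant arguments), and you must unwind the parametrization to see that $m=-1$ in Case $(5)$ is equivalent to the same-sign condition on the stabilizations of $U$ and $U_1$; these checks should be made explicit.

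The genuine gap is in your second step. The move you invoke --- cancelling $U(+1)$ against one copy of $U_{n+1}(-1)$ --- is not a valid contact Kirby move: the cancellation lemma applies only to a knot and its exact contact pushoff with coefficients $+\frac{1}{k}$ and $-\frac{1}{k}$, whereas $U_{n+1}$ is a \emph{stabilized} pushoff with $\tb(U_{n+1})=-n-2\neq \tb(U)=-1$. Indeed your purported conclusion is already smoothly false: removing $U(+1)$ and one copy of $U_{n+1}(-1)$ from the diagram would leave $U_{n+1}(-1)$ alone, i.e.\ topological $(-n-3)$-surgery on an unknot, which is the lens space $L(n+3,1)\neq\RP$ for $n\geq0$. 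The paper's proof of Case $(1)$ first performs a contact handle slide of one $(-1)$-framed copy of $U_{n+1}$ over the other, so that the diagram becomes a $(-1)$-framed unknot together with a $(+1)$-framed and a $(-1)$-framed meridian, and only then cancels, leaving the $(-1)$-framed unknot with $\tb=-1$ of Case $(0)$. Likewise, Case $(5)$ with $m=-1$ is not handled by ``iterated contact handle cancellations'' (the same objection applies), but by the lantern destabilization move of~\cite{EKS_contact_surgery_numbers}, whose hypothesis is precisely that the stabilizations involved have the same sign, and which reduces the diagram to Case $(1)$. Without these specific moves your reduction to an all-negative (Stein fillable) diagram does not go through, so the tightness half of the lemma remains unproved in your proposal.
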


\begin{proof}
    In Case $(0)$, we perform a contact $(-1)$-surgery on a single Legendrian knot in $(\mathbb{S}^3,\xist)$. Since contact surgery with a negative surgery coefficient preserves tightness~\cite{Wand} and since $\xist$ is the unique tight contact structure on $\RP$~\cite{Etnyre_lens,Honda_lens,Giroux_lens} it follows that Case $(0)$ provides a contact surgery diagram of $(\RP,\xist)$.\footnote{Alternatively, we can also argue as follows. By the transformation lemma, a negative contact surgery can be replaced by a sequence of Legendrian surgeries, which correspond to the attachment of a Weinstein $2$-handle to $D^4$~\cite{Eliashberg,Weinstein}. Thus the resulting contact structure on $\R P^3$ is fillable and thus tight by~\cite{Eliashberg_tight,Gromov}.} We also observe from Table~\ref{tab:tab2} that $\xist$ has $(\Gamma,\de_3)=(0,\frac{1}{4})$. 

    In Case $(1)$, we can use the transformation lemma as in the proof of Lemma~\ref{lem:d3} to obtain an equivalent contact $(\pm1)$-surgery diagram. In this surgery diagram, we can perform a contact handle slide of one of the $(-1)$-framed knots over the other $(-1)$-framed knot. This will yield a contact surgery diagram consisting of a $(-1)$-framed Legendrian unknot of arbitrary classical invariants together with two meridians, one framed with $(+1)$ and the other with $(-1)$. Since a $(-1)$-framed knot together with a $(+1)$-framed meridian cancel, we are left with a contact surgery diagram along a single Legendrian unknot with $t=-1$ and contact surgery coefficient $(-1)$, which represents by Case $(0)$ the tight contact structure $\xist$ on $\RP$.

    Next, we use Theorem 3.1 from~\cite{CK_contact_surgery_numbers} to deduce that any contact $r_c$-surgery along a Legendrian unknot with Thurston--Bennequin invariant $t$ is overtwisted if $0<r_c<-t$. In our setting, the contact surgery coefficient is $r_c=\frac{2}{2n+1}-t$ and thus contact $r_c$-surgery is overtwisted whenever $n<0$. It follows that the only other case that might yield a tight contact structure is Case $(5)$. To analyze this case, we compare the homotopical invariants in Case $(5)$ with the homotopical invariants of the unique tight contact structure $\xist$. If $\Gamma=1$, the contact structure is overtwisted. In the case where $\Gamma=0$ and $m<-1$, we can use $\pm2x(m+1)\geq2n(m+1)$ to estimate the $\de_3$-invariant as
    \begin{equation*}
        \de_3\geq 2m^2(2n+1)+6n+10mn+2m+\frac{1}{4}>\frac{1}{4}.
    \end{equation*}
    Thus these contact structures are all overtwisted. For $m=-1$, the $\de_3$-invariant is always $\frac{1}{4}$ and indeed in that case, the stabilizations of $U$ and $U_1$ are all of the same sign, and therefore we can use the lantern destabilization~\cite{EKS_contact_surgery_numbers} to transform that surgery diagram to the diagram from Case $(1)$ which represents $\xist$.
\end{proof}

\section{Proof of the main result and its corollaries}

Now, the main result follows by combining the previous lemmas.
    
\begin{proof}[Proof of Theorem~\ref{thm:main}]
By Lemma~\ref{lem:tight}, we see that $\cs_{\pm1}(\RP,\xist)=1$, which proves $(2)$. Since $\xist$ is the only tight contact structure on $\RP$~\cite{Etnyre_lens,Honda_lens,Giroux_lens}, we consider only overtwisted contact structures in the following.

For the other statements, we take a Legendrian knot $K$ in $(\mathbb{S}^3,\xist)$ such that contact $r_c$-surgery on $K$ yields an overtwisted contact structure on $\RP$. By Lemma~\ref{lem:contact_diagrams_RP3}, $K$ appears as one of the surgery diagrams from Table~\ref{tab:tab1}. By Lemma~\ref{lem:tight} we can ignore the cases where we get the tight contact structure, i.e.\ Cases $(0)$ and $(1)$ (and the cases $m=-1$ in the subcase of Case $(5)$, which we have omitted from Table~\ref{tab:tab2}, cf.~Proof of Lemma~\ref{lem:tight}). Thus Lemma~\ref{lem:d3} gives the classification of overtwisted contact structures on $\RP$ with $\cs=1$, which proves $(5)$.

The classification of overtwisted contact structures on $\RP$ with $\cs_{\pm1}=1$ and $\cs_{1/\Z}=1$ follows similarly. From Table~\ref{tab:tab1} we see that the only contact surgery diagrams along a single Legendrian knot with contact surgery coefficient a reciprocal integer yielding an overtwisted contact structure on $\RP$ are the ones in Cases $(2)$ and $(10)$. From Table~\ref{tab:tab2} we read off their homotopical invariants as claimed in the theorem. This shows $(3)$.

For $(4)$, i.e.\ the integer contact surgery numbers, we observe that the contact surgery coefficient $r_c=\frac{2}{2n+1}-t$ is an integer, if and only if $n=0$ or $n=-1$. If we exclude the contact surgery diagrams yielding tight contact structures, we see that this happens only in Cases $(5)$, $(10)$, and $(11)$. In Cases $(10)$ and $(11)$ we have the possible values $\left(0, 1+\frac{1}{4}\right),\,\left(1,\frac{3}{4}\right)$,
\begin{align*}
\left(0,-2m^2-4m-1+\frac{1}{4}\right),\, \left(1,-2m^2-6m-4+\frac{3}{4}\right), \textrm{ for } m\leq -1
\end{align*}
and plugging in $n=0$ in Case $(5)$ yields
\begin{align*}
\left(0,2m^2-2m+\frac{1}{4}\right),\, \left(1,2m^2-1+\frac{3}{4}\right), \textrm{ for } m\leq -1.
\end{align*}
To show $(1)$, we observe that by $(3)$ there exist two contact structures $\xi_0$, $\xi_1$ with $\Gamma(\xi_i)=i$ such that $\cs_{\pm1}(\xi_i)=1$, for $i=0,1$. Thus we get any overtwisted contact structure on $\RP$ by performing connected sums of $(\RP,\xi_i)$ with the overtwisted contact structures on $\mathbb S^3$~\cite{Ding_Geiges_Stipsicz}. But the overtwisted contact structures on $\mathbb{S}^3$ all have contact surgery number $\cs_{\pm1}\leq2$ by~\cite{EKS_contact_surgery_numbers}. It follows that any contact structure on $\RP$ has $\cs_{\pm1}\leq3$. 
\end{proof}
    
Next, we prove the corollaries of our main theorem. 

\begin{proof}[Proof of Corollary \ref{cor:unique_diag1} and \ref{cor:unique_diag2}]
        Let $K$ be a Legendrian knot in $(\mathbb{S}^3,\xist)$ such that for some $k\in\Z-\{0\}$ contact $(1/k)$-surgery on $K$ yields a contact structure on $\RP$. Then Lemma~\ref{lem:contact_diagrams_RP3} implies that $K$ appears as some surgery diagram in Table~\ref{tab:tab1}. In that table, we check that the only contact surgery diagrams along a single knot are the ones in Cases $(0)$, $(2)$, and $(10)$. By Lemma~\ref{lem:tight}, Case $(0)$ yields the standard tight contact structure $\xist$, while Case $(2)$ and Case $(10)$ yield overtwisted contact structures, and from Table~\ref{tab:tab2} we read off their homotopical invariants.
\end{proof}

Next, we prove Corollary \ref{cor:Gamma} saying that the $\Gamma$-invariant of a tangential $2$-plane field on $\RP$ is determined by its $\de_3$-invariant.

\begin{proof}[Proof of Corollary \ref{cor:Gamma}]
    Let $\xi$ be a tangential $2$-plane field on a $3$-manifold $M$ then any other $2$-plane field on $M$ with the same $spin^c$-structure (and hence same $\Gamma$-invariant) as $\xi$ can be obtained by performing connected sums with the overtwisted contact structures on $\mathbb{S}^3$~\cite{Ding_Geiges_Stipsicz}.

    On $M=\RP$, the $\Gamma$-invariant is an element of $H_1(\RP)=\Z_2$ and thus can only take two possible values. From Table~\ref{tab:tab2} we observe that there exist contact structures $\xi_0$ and $\xi_1$ on $\RP$ with $\Gamma(\xi_i)=i$ for $i=0,1$, such that $\de_3(\xi_0)\in\Z+\frac{1}{4}$ and $\de_3(\xi_1)\in\Z+\frac{3}{4}$. By the above, it follows that we get all tangential $2$-plane fields on $\RP$ by performing connected sums of $(\RP,\xi_i)$ with the overtwisted contact structures on $\mathbb{S}^3$. In our normalization of the $\de_3$-invariant the contact structures on $\mathbb{S}^3$ take exactly the integers as values and the $\de_3$-invariant is additive under connected sum. Thus we directly deduce $(1)$, $(2)$, and $(3)$. Therefore we also deduce statement $(4)$ by applying~\cite{Eliashberg_OT,Gompf_Stein}. 
\end{proof}

\begin{remark}
For deducing Corollary~\ref{cor:Gamma} we only need the computation of the $\Gamma$-invariant in Cases $(0)$ and $(2)$. Then we can deduce from Corollary~\ref{cor:Gamma} the values of the $\Gamma$-invariants just from the values of the $\de_3$-invariants. So in principle, the computations of the $\Gamma$-invariants in the proof of Theorem~\ref{thm:main} is not needed to deduce the main result. However, we included these computations here, since the results from Theorem~\ref{thm:main} can then be verified by checking that they are compatible with Corollary~\ref{cor:Gamma}. 
\end{remark}

It remains to show Corollary~\ref{cor:cs2}, which states that there exist infinitely many contact structures on $\RP$ with $\cs=2$. We will provide a more concrete version of Corollary~\ref{cor:cs2} below. For that, we will introduce the following notation. By Corollary~\ref{cor:Gamma} for an integer $d\in\Z$, we can write $\xi_{(0,d)}$ for the unique overtwisted contact structure on $\RP$ with $\Gamma(\xi_{(0,d)})=0$ and $\de_3(\xi_{(0,d)})=d+\frac{1}{4}$. Similarly, we write $\xi_{(1,d)}$ for the unique overtwisted contact structure on $\RP$ with $\Gamma(\xi_{(1,d)})=1$ and $\de_3(\xi_{(1,d)})=d+\frac{3}{4}$.   

\begin{cor}\hfill
\begin{enumerate}
    \item If $d\in\Z$ is even and negative, then $\cs(\xi_{(0,d)})>1$.
    \item If $d\in\Z$ is odd and negative, then $\cs(\xi_{(1,d)})>1$.
    \item For all $m\leq-3$, it follows that 
    \begin{align*}
         \cs(\xi_{(0,-2m^2-4m)})=2 \,\textrm{ and }\, \cs(\xi_{(1,-2m^2-6m-3)})=2.
    \end{align*}
\end{enumerate}
\end{cor}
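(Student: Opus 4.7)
The plan is to combine a parity analysis of Table~\ref{tab:tab2} (for parts (1) and (2) and the lower bound in part (3)) with an explicit connected-sum construction (for the upper bound in part (3)). By Theorem~\ref{thm:main}(5) a contact structure on $\RP$ has $\cs=1$ if and only if its $(\Gamma,d_3)$-pair occurs in one of Cases (2)--(11) of Table~\ref{tab:tab2}, so I would go row by row through that table and compute the parity of $d=d_3-\tfrac14$ when $\Gamma=0$ and of $d=d_3-\tfrac34$ when $\Gamma=1$. For every row except Case (5) this is a one-line inspection of the explicit polynomial in $m,n$: all possibly-negative values of $d$ turn out to be odd when $\Gamma=0$ and even when $\Gamma=1$, which is exactly what (1) and (2) demand.

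The one row that needs more than a mod-$2$ glance is Case (5). With $\Gamma=0$ the formula $d=2m^2(2n+1)+4n+2m(4n+1)\pm 2x(m+1)$ is manifestly even, so I must instead rule out $d<0$. A direct computation shows that at $m=-2$ one has $d/2=2+2n-\varepsilon x\geq 2+n\geq 2$ for $\varepsilon=\pm 1$ and $0\leq x\leq n$, while the identity
\[
\tfrac{d}{2}(m)-\tfrac{d}{2}(-2)=(m+2)\bigl[(2n+1)m-1+\varepsilon x\bigr]
\]
is a product of two negative quantities for any $m\leq -3$, $n\geq 0$ and admissible $\varepsilon x$. Hence the minimum is attained at $m=-2$ and $d\geq 4>0$. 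The analogous minimisation in the $\Gamma=1$ subcase of Case (5) produces $d$ odd with $d\geq 1$. Together with the other rows this finishes (1) and (2).

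For part (3) the lower bound $\cs\geq 2$ in both families is immediate from (1) and (2) once one checks that $-2m^2-4m=-2m(m+2)$ is a negative even integer and $-2m^2-6m-3$ a negative odd integer for every $m\leq -3$. For the matching upper bound I would use an explicit connected-sum decomposition. Case (11) of Table~\ref{tab:tab2} already gives $\cs=1$ contact structures $\xi_{(0,-2m^2-4m-1)}$ and $\xi_{(1,-2m^2-6m-4)}$ on $\RP$ for every $m\leq -1$, so the remaining input is a contact structure on $\mathbb{S}^3$ with $d_3=1$ and $\cs=1$. Applying Lemma~\ref{lem:d3} to contact $(+1)$-surgery along the Legendrian unknot with $\tb=-2$ and $|\rot|=1$ gives a contact $\mathbb{S}^3$ (since the topological coefficient is $-1$) with $d_3=1$, and the criterion $0<r_c<-\tb$ from~\cite{CK_contact_surgery_numbers} that we already invoked in the proof of Lemma~\ref{lem:tight} shows it is overtwisted. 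Because $d_3$ is additive under connected sum and $\Gamma$ is unchanged when summing with a simply-connected summand, this yields
\[
\xi_{(0,-2m^2-4m)}\cong\xi_{(0,-2m^2-4m-1)}\#(\mathbb{S}^3,\xi_{\mathrm{ot},1}),\qquad \xi_{(1,-2m^2-6m-3)}\cong\xi_{(1,-2m^2-6m-4)}\#(\mathbb{S}^3,\xi_{\mathrm{ot},1}),
\]
from which subadditivity of $\cs$ under connected sum gives $\cs\leq 1+1=2$ in each family.

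The main obstacle will be the positivity analysis in Case (5); everything else reduces to parity bookkeeping and a single standard application of the connected-sum formalism.
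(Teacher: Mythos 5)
Your proposal is correct and follows essentially the same route as the paper: parity inspection of Table~\ref{tab:tab2} for (1) and (2), a positivity argument in Case (5), and for (3) the split union of the Case (11) diagrams with contact $(+1)$-surgery on the $\tb=-2$ Legendrian unknot, using additivity of $\de_3$ under connected sum. The only (harmless) difference is that you prove positivity in Case (5) by a self-contained minimisation at $m=-2$ and the factorisation identity, whereas the paper simply reuses the estimate $\de_3>\tfrac14$ already established in the proof of Lemma~\ref{lem:tight} for the $\Gamma=0$ subcase and gives a short direct bound in the $\Gamma=1$ subcase.
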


\begin{proof}
    We start by proving $(1)$ and $(2)$. By Corollary~\ref{cor:Gamma}, there exists for every overtwisted contact structure $\xi$ on $\RP$ a unique pair $(i,d)\in\Z_2\times \Z$ such that $\xi$ is contactomorphic to $\xi_{(i,d)}$. If $\cs(\xi_{(i,d)})=1$ then its pair of $\Gamma$- and $\de_3$-invariant appears in Table~\ref{tab:tab2}. By analyzing that table we will prove the statements.

    First, we prove $(1)$. If $\Gamma(\xi_{(i,d)})=i=0$, then we see from Table~\ref{tab:tab1} that $d$ is odd in all cases except Case $(5)$, in which it is always even. But in the proof of Lemma~\ref{lem:tight} we have estimated $d>0$ in Case $(5)$. Thus it follows that if $d\in\Z$ is even and negative, then it cannot be obtained by a single rational contact surgery from $(\mathbb S^3,\xist)$. 

    For $(2)$, we proceed analogously. We observe that if $i=1$, then $d$ is either $1$, even, or takes the odd values in Case $(5)$. In Case $(5)$, we estimate $d$ as
    \begin{align*}
        d&\geq 2m^2(2n+1)+n(4m+1)-1+(2m+1)n\\
        &=(2m^2+1)(2n+1)+6mn-2>0,
    \end{align*}
    which implies Statement $(2)$.

    To prove Statement $(3)$, we observe that by $(1)$ and $(2)$ these families of contact structures have $\cs>1$. To write them as surgery on a $2$-component link we consider the overtwisted contact structure on $\mathbb S^3$ with $\de_3$-invariant $1$, which can be obtained by a contact $(+1)$-surgery along a Legendrian unknot $U$ with $\tb=-2$. Then we take the split union of $U$ and the contact surgery diagrams from Case $(11)$. This corresponds to taking the connected sum of the contact structures on $\RP$ from Case $(11)$ and the overtwisted contact structure on $\mathbb S^3$ with $\de_3=1$. Since in our normalization, the $\de_3$-invariant behaves additive, we get the claimed contact structures by surgery along a $2$-component Legendrian link from $(\mathbb{S}^3,\xist)$.
\end{proof}

  \let\MRhref\undefined
  \bibliographystyle{hamsalpha}
  \bibliography{ref.bib}

\end{document}